\newtheorem{thm}{Theorem}[section] 
\newtheorem*{thm*}{Theorem}
\newtheorem{prop}[thm]{Proposition}
\newtheorem{cor}[thm]{Corollary} 
\newtheorem{lem}[thm]{Lemma}
\theoremstyle{definition} 
\newtheorem{rem}[thm]{Remark}  
\newtheorem{exa}[thm]{Example}
\numberwithin{equation}{section}
\newcommand{\skal}[2]{\langle #1,#2\rangle}
\newcommand{\alg}[1]{\mathfrak{#1}}
\begin{document}

\title{Two notes on $Spin(7)$--structures}
\author{Kamil Niedzia\l omski}
\date{}

\subjclass[2020]{53C10; 53C15}
\keywords{intrinsic torsion; ${\rm Spin}(7)$--structure; scalar curvature; divergence formula}
 
\address{
Department of Mathematics and Computer Science \endgraf
University of \L\'{o}d\'{z} \endgraf
ul. Banacha 22, 90-238 \L\'{o}d\'{z} \endgraf
Poland
}
\email{kamil.niedzialomski@wmii.uni.lodz.pl}

\begin{abstract}
We derive the explicit formula for the intrinsic torsion of a ${\rm Spin}(7)$-structure on a $8$--dimensional Riemannian manifold $M$. Here, the intrinsic torsion is a difference of the minimal ${\rm Spin}(7)$--connection and the Levi-Civita connection. Hence it is a a section of a bundle $T^{\ast}M\otimes\alg{spin}^{\bot}(M)$. The formula relates the intrinsic torsion with the Lee form $\theta$ and $\Lambda^3_{48}$--component $(\delta\Phi)_{48}$ of a codifferential $\delta\Phi$ of the $4$--form defining a given structure. Using the formula obtained, we compute the condition for a ${\rm Spin}(7)$ structure of type $\mathcal{W}_8$ to be (second order) nearly parallel. Moreover, applying the divergence formula obtained by the author for general Riemannian $G$--structure in another article, we rediscover the well known formula for the scalar curvature in terms of norms of $\theta$, $(\delta\Phi)_{48}$ and the divergence ${\rm div}\theta$. We justify the formula on appropriate examples.   
\end{abstract}

\maketitle

\section{Introduction}

Among the possible Riemannian holonomy groups of simply connected, irreducible and non--symmetric spaces there are two called exceptional \cite{MB}. One of them is ${\rm Spin}(7)$--structure on a $8$--dimensional Riemannian manifold. Such structures have been extensively studied by many authors in many contexts. The first approach to classifying such structures was taken by M. Fernandez \cite{MF}. She showed that the covariant derivative $\nabla\Phi$ of the $4$--form $\Phi$ defining a ${\rm Spin}(7)$--structure, determines all possible classes of ${\rm Spin}(7)$--structure. Moreover, the holonomy group $SO(n)$ reduces to ${\rm Spin}(7)$ precisely when $\nabla\Phi$ vanishes. The space $\mathcal{W}$ of elements of $T^{\ast}M\otimes \Lambda^4(M)$ with the "symmetries" as covariant derivative $\nabla\Phi$ splits into two irreducible $\mathcal{W}_8$ and $\mathcal{W}_{48}$--modules. ${\rm Spin}(7)$. Moreover, M. Fernandez showed that instead of $\nabla\Phi$, we may consider $d\Phi$. This implies that it is also sufficient to consider the codifferential $\delta\Phi$. Knowing the decomposition of $\Lambda^3(M)$ into two irreducible modules, $\Lambda^3(M)=\Lambda^3_8\oplus\Lambda^3_{48}$, it follows that the ${\rm Spin}(7)$--structure is determined by the components $(\delta\Phi)_8$ and $(\delta\Phi)_{48}$, where the first depends on the Lee form $\theta=\star(\delta\Phi\wedge\Phi)$. 

The intrinsic torsion $\xi$ of a ${\rm Spin}(7)$--structure is a $(1,2)$--tensor measuring the defect of the Levi--Civita connection $\nabla$ to be a ${\rm Spin}(7)$--connection. Analogously, $\xi$ determines all classes of ${\rm Spin}(7)$--structures, since $\nabla_X\Phi=\xi_X\Phi$. The intrinsic torsion has been considered by many authors. However, the explicit formula, up to the author's knowledge, has been obtained by F. M. Cabrera and S. Ivanov in \cite{CI}, S. Karigiannis in \cite{SK2}, and M. Merchan in \cite{MM}. The formulas obtained by these authors rely on $\nabla\Phi$ or $d\Phi$ and $\theta$ rather on $\delta\Phi$. We propose a formula, using different approach, depending on $\delta\Phi$ and $\theta$. The approach in this article is also justified by the fact that the main emphasis is put on
\begin{itemize}
    \item deriving a condition for a ${\rm Spin}(7)$--structure to be second order nearly parallel, which involves computation of the covariant derivative of the intrinsic torsion,
    \item deriving a divergence formula relating the scalar curvature $s$, the Lee form and the codifferential $\delta\Phi$. Such divergence formula has been obtained, for example in \cite{SI}, but using different methods.
\end{itemize}
Before stating our main results, let us introduce the necessary objects. The intrinsic torsion $\xi$ allows one to define four interesting objects -- symmetric $2$--tensor $r$, alternation $\xi^{\rm alt}_XY$, symmetrization $\xi^{\rm sym}_XY$ and so-called characteristic vector field $\chi$ -- in the following way:   
\begin{align*}
& r(X,Y) = \sum_i g(\xi_Xe_i, \xi_Ye_i), &&\chi = \sum_i \xi_{e_i}e_i,\\
& \xi^{\rm alt}_XY = \frac{1}{2}(\xi_XY-\xi_YX),  &&\xi^{\rm sym}_XY = \frac{1}{2}(\xi_XY+\xi_YX).
\end{align*}
Here, $(e_i)$ is an orthonormal basis.

The first part of this note is to look for ${\rm Spin}(7)$--structures of type $\mathcal{W}_8$ (locally conformal parallel) such that the intrinsic torsion $\xi$ is parallel with respect to the connection $\nabla^{{\rm Spin}(7)} = \nabla + \xi$ and such that $r$ is a parallel tensor with respect to the same connection $\nabla^{{\rm Spin}(7)}$. In this case, we obtain the following equivalences:
\begin{equation*}
\nabla^{{\rm Spin}(7)}\xi=0 \quad\Longleftrightarrow\quad \nabla^{{\rm Spin}(7)}r=0 \quad\Longleftrightarrow\quad \nabla^{{\rm Spin}(7)}\theta = 0.
\end{equation*}
They imply non--existence of locally conformal parallel ${\rm Spin}(7)$--structures with parallel intronsic torsion or parallel tensor $r$.

The second part is devoted to a general formula relating the last three objects with the curvatures of considered $G$--structure, in our case ${\rm Spin}(7)$--structure \cite{KN}:
\begin{equation*}
{\rm div}\,\chi=\tilde{s}+|\chi|^2+|\xi^{\rm alt}|^2-|\xi^{\rm sym}|^2,
\end{equation*}
where $\tilde{s}$ is a type of scalar curvature obtained by a decomposition of the Riemann curvature tensor. It is interesting that in the case of ${\rm Spin}(7)$ we have $\chi=-\frac{7}{16}\theta^{\sharp}$, $\tilde{s}=\frac{1}{4}s$, and $|\xi^{\rm alt}|^2-|\xi^{\rm sym}|^2$ is a combination of $|\theta|^2$ and $|(\delta\Phi)_{48}|^2$. This allows us to prove the main formula:
\begin{equation}\label{eq:main}
s=\frac{21}{8}|\theta|^2-\frac{1}{2}|(\delta\Phi)_{48}|^2+\frac{7}{2}{\rm div}\,\theta.
\end{equation}
Such formula have been already derived by S. Ivanov \cite{SI} and, later, by U. Fowder \cite{UF}. However, the formula for $s$ in \cite{SI} depends on the norm $\|T^c\|$ of the characteristic torsion $T^c$ and is obtained with the use of the Lichnerowicz formula relating the Laplace operator and the Dirac operator, while the approach in \cite{UF} is based on representation theory. Our approach to this formula in different. In the case of balanced structure, the formula becomes even simpler, since the Lee form vanishes,
\begin{equation*}
s=-\frac{1}{2}|(\delta\Phi)_{48}|^2.
\end{equation*}
This gives constraints on the geometry of such structures -- the scalar curvature is non--positive. This was already noticed in \cite{SI}. We also compare the main divergence formula with the results obtained in \cite{BHL}, where the authors use a Bochner--type integral formula to derive different formulas for Riemannian $G$--structures and, in particular, for ${\rm Spin}(7)$--structures. In fact, integrating \eqref{eq:main} and assuming $M$ is closed, we get
\begin{equation*}
\int_M s\,{\rm vol}_M=\int_M(\frac{21}{8}|\theta|^2-\frac{1}{2}|(\delta\Phi)_{48}|^2)\,{\rm vol}_M.
\end{equation*} 
We end the article with appropriate examples. We justify the formula \eqref{eq:main} by computing both sides in two cases: $M(k)\times\mathbb{T}^5$ and $M(k)\times H/\Gamma\times\mathbb{T}^2$, where $M(k)$ is a certain quotient of the three--dimensional Lie group, $H$ is the Heisenberg group and $\mathbb{T}$ is a torus.

\section{Intrinsic torsion of a ${\rm Spin}(7)$--structure}

\subsection{Algebraic background}

Let us briefly introduce the algebraic background concerning the Lie group ${\rm Spin}(7)$, its Lie algebra $\alg{spin}(7)$ and irreducible representations of ${\rm Spin}(7)$ acting on the spaces of skew forms.  

Consider the Euclidean space $\mathbb{R}^8$ with the canonical orthonormal basis $\{e_0,e_1,\ldots,e_7\}$ and a $4$--form $\Phi$ given by
\begin{multline}\label{eq:Phi}
\Phi=e^{0123}-e^{0145}-e^{0167}-e^{0246}+e^{0257}-e^{0347}-e^{0356}\\
+e^{4567}-e^{2367}-e^{2345}-e^{1357}+e^{1346}-e^{1256}-e^{1247}.
\end{multline}
Notice that we may write $\Phi$ in the form $\Phi=e^0\wedge\varphi+\star(e^0\wedge\varphi)$, where $\varphi$ defines the Lie group $G_2$ and $\star$ is the Hodge star operator. Notice that $\Phi$ is self-dual, $\star \Phi=\Phi$. The group ${\rm Spin}(7)$ is the group of elements $g\in SO(8)$ leaving $\Phi$ invariant,
\begin{equation*}
{\rm Spin}(7)=\{g\in SO(8)\mid g^{\ast}\Phi=\Phi\}.
\end{equation*} 
With this approach, we get the action of ${\rm Spin}(7)$ on $\mathbb{R}^8$ and thus on all tensors and differential forms. We will frequently use the following relation \cite{SK}
\begin{equation*}
\star(\Phi\wedge \eta)=\,\eta\lrcorner\Phi,
\end{equation*}
valid for any $k$--form $\eta$.

There are the following decompositions into irreducible ${\rm Spin}(7)$--modules \cite{SW,MF, SK}
\begin{align*}
\Lambda^2(\mathbb{R}^8) &=\Lambda^2_7\oplus\Lambda^2_{21},\\
\Lambda^3(\mathbb{R}^8) &=\Lambda^3_8\oplus\Lambda^3_{48},\\
\Lambda^5(\mathbb{R}^8) &=\Lambda^5_8\oplus\Lambda^3_{48},
\end{align*}
where the lower index indicates the dimension and where each submodule can be described as 
\begin{align*}
\Lambda^2_7 &=\{\omega\mid \star(\Phi\wedge\omega)=3\omega\}
=\{\omega+\omega\lrcorner\Phi\mid \omega\in \Lambda^2((\mathbb{R}^8)^{\ast})\} \\
&=\{\omega_{\xi}\mid \xi\in\alg{spin}(7)^{\bot}\},\\
\Lambda^2_{21} &=\{\omega\mid \star(\Phi\wedge\omega)=-\omega\}
=\{\omega_{\xi}\mid \xi\in\alg{spin}(7)\},\\
\Lambda^3_8 &=\{\star(\Phi\wedge\eta)\mid \eta\in\Lambda^1((\mathbb{R}^8)^{\ast}\},\\
\Lambda^3_{48} &=\{\gamma\mid \Phi\wedge\gamma=0\},\\
\Lambda^5_8 &=\{\Phi\wedge\eta\mid \eta\in\Lambda^1(\mathbb{R}^8)^{\ast}\},\\
\Lambda^5_{48} &=\{\mu\mid \Phi\wedge(\star\mu)=0\}.
\end{align*}
In the above definitions, $\omega_{\xi}$ stands for a $2$--form induced by a skew--matrix $\xi\in\alg{so}(8)$. The spaces $\Lambda^k$ for$k=5,6,7$ can be described analogously by the fact that the Hodge operator $\star:\Lambda^{8-k}(\mathbb{R}^8)\to \Lambda^k(\mathbb{R}^8)$ is an ${\rm Spin}(7)$-isomorphism. Moreover, we have the following isomorphisms
\begin{equation*}
\Lambda^1\ni \alpha\mapsto \Phi\wedge\alpha\in\Lambda^5_8,\quad
\Lambda^1\ni \alpha\mapsto \star(\Phi\wedge\alpha)\in\Lambda^3_8
\end{equation*}
which follow by the following identity \cite{SK}
\begin{equation}\label{eq:isomorphism}
\star(\Phi\wedge\star(\Phi\wedge\eta))=-7\eta,\quad \eta\in\Lambda^1.
\end{equation}
Moreover, if $\mu=\Phi\wedge\eta\in\Lambda^5_8$, alternatively, $\gamma=\star(\Phi\wedge\eta)\in\Lambda^3_8$, then a one--form $\eta$ may be recovered as follows (using \eqref{eq:isomorphism})
\begin{equation*}
\eta=-\frac{1}{7}\star(\Phi\wedge\star\mu),\quad\textrm{alternatively},\quad \eta=-\frac{1}{7}\star(\Phi\wedge\gamma).
\end{equation*}

For any $2$--form $\eta$ denote its components with respect to the decomposition $\Lambda^2(\mathbb{R}^8)=\Lambda^2_7\oplus\Lambda^2_{21}$ by $\eta_7$ and $\eta_{21}$, respectively. Analogously, we decompose $3$--form $\gamma$ as $\gamma=\gamma_8+\gamma_{48}$ and $5$--form $\mu$ as $\mu=\mu_8+\mu_{48}$.

We will need the following relations taken from \cite{SK} (Proposition 4.3.1 and Appendix A):
\begin{align}
\star(v\lrcorner\alpha) &=(-1)^{k+1}v^{\flat}\wedge\star\alpha,\label{eq:A0}\\
v\lrcorner\star\alpha &=(-1)^k\star(v^{\flat}\wedge\alpha),\label{eq:A1}\\
v\lrcorner w\lrcorner\Phi &=-3(v^{\flat}\wedge w^{\flat})_7+(v^{\flat}\wedge w^{\flat})_{21},\label{eq:A2}\\
\star((v\lrcorner\Phi)\wedge(w\lrcorner\Phi)) &=2(v^{\flat}\wedge w^{\flat})_7-6(v^{\flat}\wedge w^{\flat})_{21},\label{eq:A3}\\
(v\lrcorner w\lrcorner\Phi)\wedge\Phi &=-(v\lrcorner\Phi)\wedge(w\lrcorner\Phi)-7\star(v^{\flat}\wedge w^{\flat}),
\label{eq:A4}\\
\Phi\wedge(\gamma\lrcorner\Phi) &=7\star\gamma_8\label{eq:A5},
\end{align}
where $\alpha$ is any $k$--form and $\gamma$ is a $3$--form. Relations \eqref{eq:A3} and \eqref{eq:A4} imply the following formula
\begin{equation}\label{eq:A6}
(\eta\lrcorner\Phi)\wedge\Phi=9\star\eta_7+\star\eta_{21}
\end{equation}
for any $2$--form $\eta$.

Notice that, by the description of $\Lambda^2_7$ and $\Lambda^2_{21}$ we see that
\begin{align*}
\omega_7 &=\frac{1}{4}(\omega+\omega\lrcorner\Phi)=\frac{1}{4}(\omega+\star(\Phi\wedge\omega)),\\
\omega_{21} &=\frac{1}{4}(3\omega-\omega\lrcorner\Phi)=\frac{1}{4}(3\omega-\star(\Phi\wedge\omega)).
\end{align*}

\subsection{Explicit formula for intrinsic torsion of a ${\rm Spin}(7)$--structure}

Let $M$ be a ${\rm Spin}(7)$--structure, i.e. $M$ is a Riemannian $8$--dimensional manifold such that there is a reduction $P$ of the orthonormal frame bundle ${\rm SO}(M)$ to the subgroup ${\rm Spin}(7)\subset SO(8)$. Alternatively, it is equivalent to the existence of a global $4$--form $\Phi$, which locally can be written in the form \eqref{eq:Phi}. Denote by $\nabla$ the Levi-Civita connection on $M$ and by $\omega$ the corresponding connection form on ${\rm SO}(M)$. The ${\rm Ad}$--invariant decomposition of the Lie algebra $\alg{so}(8)$ given by
\begin{equation}\label{eq:so8decomposition}
\alg{so}(8)=\alg{spin}(7)\oplus\alg{spin}(7)^{\bot},
\end{equation} 
induces decomposition of the connection form $\omega$: $\omega=\omega_{\alg{spin}(7)}+\omega_{\alg{spin}(7)^{\bot}}$. The first component induces the Riemannian connection $\nabla^{{\rm Spin}(7)}$ on $M$, whereas the second component induces (with the minus sign) $(1,2)$--tensor $\xi$ called the intrinsic torsion. In other words,
\begin{equation*}
\nabla^{{\rm Spin}(7)}=\nabla+\xi.
\end{equation*} 
By definition, it follows that $\xi\in T^{\ast}M\otimes\alg{ spin}^{\bot}(M)$, where ${\rm spin}^{\bot}(M)=P\times_{{\rm Spin}(7)}\alg{spin}(7)^{\bot}$ is a subbundle of skew endomorphisms of $M$ or, equivalently, of $2$--forms. 

The space of all possible intrinsic torsions $\mathcal{T}=T^{\ast}M\otimes \alg{spin}^{\bot}(M)$ splits into two irreducible ${\rm Spin}(7)$--modules, $\mathcal{T}=\mathcal{W}_{48}\oplus\mathcal{W}_8$. They can be described as follows \cite{MF, SK}
\begin{align*}
&\textrm{balanced}\,(\mathcal{W}_{48}): &&\, \theta=0,\\ 
&\textrm{locally conformal parallel}\,(\mathcal{W}_8): &&\, d\Phi=\theta\wedge\Phi.
\end{align*}
where $\theta$ is the Lee form of a ${\rm Spin}(7)$--structure,
\begin{equation*}
\theta=\frac{1}{7}\star(\delta\Phi\wedge\Phi).
\end{equation*}
Let us elaborate on that. For a $\mathcal{W}_{48}$--structure, we get $0=\Phi\wedge\delta\Phi$, which implies $\delta\Phi\in\Lambda^3_{48}$. Equivalently, $d\Phi\in \Lambda^5_{48}$. For a $\mathcal{W}_8$--structure, by definition, we have $d\Phi\in \Lambda^5_8$ and $\delta\Phi=-\star(\Phi\wedge\theta)\in\Lambda^3_8$.

The following useful result is well known.

\begin{lem}\label{lem:Tdecmposition}
The following relations hold
\begin{equation*}
(\delta\Phi)_8=-\star(\Phi\wedge\theta)\quad\textrm{and}\quad |(\delta\Phi)_8|^2=7|\theta|^2.
\end{equation*}
\end{lem}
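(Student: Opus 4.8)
The plan is to prove both statements by starting from the known orthogonal decomposition $\delta\Phi = (\delta\Phi)_8 + (\delta\Phi)_{48}$ and the characterization of $\Lambda^3_8$ as the image of the isomorphism $\eta \mapsto \star(\Phi\wedge\eta)$ given in the excerpt. First I would write $(\delta\Phi)_8 = \star(\Phi\wedge\beta)$ for a unique one-form $\beta$, and then identify $\beta$ in terms of the Lee form. Since $\Lambda^3_{48} = \{\gamma \mid \Phi\wedge\gamma = 0\}$, wedging the full decomposition with $\Phi$ kills the $\Lambda^3_{48}$-part, so $\delta\Phi\wedge\Phi = (\delta\Phi)_8\wedge\Phi = \star(\Phi\wedge\beta)\wedge\Phi$. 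The definition $\theta = \tfrac{1}{7}\star(\delta\Phi\wedge\Phi)$ then reduces the problem to evaluating $\star\big(\star(\Phi\wedge\beta)\wedge\Phi\big)$.

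The key computation is to show $\star\big(\Phi\wedge\star(\Phi\wedge\beta)\big) = -7\beta$, which is exactly the identity \eqref{eq:isomorphism} from the excerpt (after commuting the wedge factors, which only introduces a sign that works out because $\Phi$ has even degree). Concretely: $\star(\Phi\wedge\beta)$ is a $3$-form, wedging with the $4$-form $\Phi$ gives a $7$-form, and $\Phi\wedge\star(\Phi\wedge\beta) = \star(\Phi\wedge\beta)\wedge\Phi$ since one factor has degree $4$. Applying $\star$ and \eqref{eq:isomorphism} gives $\star(\delta\Phi\wedge\Phi) = -7\beta$, hence $\theta = -\beta$, i.e. $\beta = -\theta$ and therefore $(\delta\Phi)_8 = -\star(\Phi\wedge\theta)$, which is the first claim.

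For the norm identity, I would again invoke \eqref{eq:isomorphism}, or rather its consequence that the map $\eta\mapsto\star(\Phi\wedge\eta)$ scales norms by a fixed constant. The cleanest route is to compute $|\star(\Phi\wedge\eta)|^2 = \langle \star(\Phi\wedge\eta), \star(\Phi\wedge\eta)\rangle = \langle \Phi\wedge\eta, \Phi\wedge\eta\rangle$, and then use the adjunction/contraction relations: $\langle \Phi\wedge\eta, \Phi\wedge\eta\rangle = \langle \eta, \eta\lrcorner(\star(\Phi\wedge\star\Phi))\rangle$ or, more directly, pair $\Phi\wedge\eta$ with itself via $\star$ and reduce using $\star(\Phi\wedge\star(\Phi\wedge\eta)) = -7\eta$ up to sign bookkeeping — one gets $|\Phi\wedge\eta|^2 = 7|\eta|^2$ for a one-form $\eta$ (this is consistent with $\dim\Lambda^3_8 = 8$ and the fact that the isomorphism has ``eigenvalue modulus'' $\sqrt 7$). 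Taking $\eta = \theta$ and using $(\delta\Phi)_8 = -\star(\Phi\wedge\theta)$ yields $|(\delta\Phi)_8|^2 = 7|\theta|^2$.

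The main obstacle is purely the sign and degree bookkeeping: keeping track of the $(-1)^{k}$ factors in \eqref{eq:A0}--\eqref{eq:A1} when commuting $\star$ past wedge and interior products, and making sure the constant coming out of \eqref{eq:isomorphism} is threaded correctly through the norm computation (it is easy to land on $7$ versus $\tfrac17$ or pick up a spurious sign). There is no conceptual difficulty — everything follows mechanically from \eqref{eq:isomorphism} and the identity $\langle\star\alpha,\star\beta\rangle = \langle\alpha,\beta\rangle$ together with $\langle\alpha\wedge\gamma,\lambda\rangle = \langle\gamma, \alpha^\sharp\lrcorner\lambda\rangle$ — but the computation must be done carefully. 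An alternative that sidesteps some of this is to note that $\Lambda^3_8$ is irreducible, so the composition $\eta\mapsto\star(\Phi\wedge\eta)\mapsto -\tfrac17\star(\Phi\wedge\star(\Phi\wedge\eta)) = \eta$ forces $\star(\Phi\wedge\,\cdot\,)$ restricted to one-forms to be $\sqrt7$ times an isometry onto $\Lambda^3_8$, immediately giving the norm ratio; then only the first identity needs the explicit sign check.
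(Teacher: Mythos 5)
Your argument is correct, and for the first identity it takes a mildly different route from the paper. The paper computes $(\delta\Phi)_8$ directly from the explicit projection identity $\Phi\wedge(\gamma\lrcorner\Phi)=7\star\gamma_8$ (relation \eqref{eq:A5}) applied to $\gamma=\delta\Phi$, then rewrites $\delta\Phi\lrcorner\Phi=\star(\delta\Phi\wedge\Phi)$ to bring in $\theta$. You instead use only the abstract description of the summands: $(\delta\Phi)_8=\star(\Phi\wedge\beta)$ for a unique one--form $\beta$ because $\star(\Phi\wedge\cdot)$ is an isomorphism onto $\Lambda^3_8$, the $\Lambda^3_{48}$--part is killed by wedging with $\Phi$, and then \eqref{eq:isomorphism} inverts the map to give $\beta=-\theta$. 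What this buys you is that the only nontrivial identity needed is \eqref{eq:isomorphism}; the cost is that you must take for granted that the two listed descriptions of $\Lambda^3_8$ and $\Lambda^3_{48}$ realize the orthogonal decomposition, which the paper's background section does grant. The norm computation is essentially identical to the paper's: both reduce $|\Phi\wedge\theta|^2\,\mathrm{vol}_M=(\Phi\wedge\theta)\wedge\star(\Phi\wedge\theta)$ to $7\,\theta\wedge\star\theta$ via $\Phi\wedge\star(\Phi\wedge\theta)=7\star\theta$, which is \eqref{eq:isomorphism} after one more Hodge star; your sign bookkeeping is consistent, since $\star\star=-1$ on $7$--forms in dimension $8$ turns $\star\mu=-7\theta$ into $\mu=7\star\theta$, and commuting the degree--$4$ form $\Phi$ past one--forms introduces no signs.
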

\begin{proof}
By \eqref{eq:A6} we have
\begin{equation*}
(\delta\Phi)_8=-\frac{1}{7}\star(\Phi\wedge((\delta\Phi)\lrcorner\Phi)=-\frac{1}{7}\star(\Phi\wedge\star(\delta\Phi\wedge\Phi))=-\star(\Phi\wedge\theta).
\end{equation*}
Thus, by the first part and identity \eqref{eq:isomorphism}, we get
\begin{equation*}
|(\delta\Phi)_8|^2{\rm vol}_M=|\Phi\wedge\theta|^2{\rm vol}_M=(\Phi\wedge\theta)\wedge\star(\Phi\wedge\theta)=7\theta\wedge\star\theta=7|\theta|^2{\rm vol}_M.
\end{equation*}
\end{proof}

Let us describe the intrinsic torsion $\xi$ explicitly. It has been already done at least in three different ways \cite{CI,SK2,MM}. Inspired by approach in \cite{SI} and \cite{IA, TF} we give another formula for the intrinsic torsion, most suitable for our purposes. Firstly, it can be shown \cite{TF, SI} that on any ${\rm Spin}(7)$--structure there is a unique characteristic connection $\nabla^c$ defined by the condition that it preserves $\Phi$, hence $g$ as well, and has a totally skew--symmetric torsion. This is a Riemannian connection, which is in fact ${\rm Spin}(7)$--connection and has totally skew--symmetric torsion $T^c$. The formula for $T^c$ is the following \cite{SI}
\begin{equation*}
T^c=-\delta\Phi-\frac{7}{6}\star(\theta\wedge\Phi).
\end{equation*}
Notice that by Lemma \ref{lem:Tdecmposition}, $T^c$ can be rewritten as
\begin{equation*}
T^c=\frac{1}{6}(\delta\Phi)_8-(\delta\Phi)_{48},
\end{equation*}
which implies
\begin{equation}\label{eq:Tc_norm}
|T^c|^2=\frac{1}{36}|(\delta\Phi)_8|^2+|(\delta\Phi)_{48}|^2
=\frac{7}{36}|\theta|^2+|(\delta\Phi)_{48}|^2.
\end{equation}
Moreover, it is known \cite{IA, TF, SI} that the characteristic connection exists for any ${\rm Spin}(7)$--structure and the intrinsic torsion is of the form
\begin{equation}\label{eq:inttorsioncharacteristictorsion}
\xi_X=\frac{1}{2}(X\lrcorner T^c)_7.
\end{equation}
This relation can be proven directly (see \cite{IA, TF} or \cite{MM} (proof of Proposition 5.1) using the spinorial approach).

\begin{prop}\label{prop:intrinsictorsion}
The intrinsic torsion of a ${\rm Spin}(7)$--structure can be described in the following ways 
\begin{equation*}
\xi_X =-\frac{1}{2}(X\lrcorner\delta\Phi)_7+\frac{7}{4}(X^{\flat}\wedge\theta)_7\quad\textrm{or}\quad \xi_X=-\frac{1}{2}(X\lrcorner(\delta\Phi)_{48})_7+\frac{1}{4}(X^{\flat}\wedge\theta)_7.
\end{equation*}
\end{prop}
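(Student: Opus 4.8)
The plan is to start from the known expression \eqref{eq:inttorsioncharacteristictorsion}, namely $\xi_X=\tfrac12(X\lrcorner T^c)_7$, and substitute the formula for the characteristic torsion $T^c=-\delta\Phi-\tfrac76\star(\theta\wedge\Phi)$. This immediately gives
\begin{equation*}
\xi_X=-\frac12\bigl(X\lrcorner\delta\Phi\bigr)_7-\frac{7}{12}\bigl(X\lrcorner\star(\theta\wedge\Phi)\bigr)_7.
\end{equation*}
So the first main task is to rewrite the term $\bigl(X\lrcorner\star(\theta\wedge\Phi)\bigr)_7$ in terms of $X^\flat\wedge\theta$. Using \eqref{eq:A1} with $v=X$ and $\alpha=\theta\wedge\Phi$ (a $5$-form, so $(-1)^5=-1$), we get $X\lrcorner\star(\theta\wedge\Phi)=-\star\bigl(X^\flat\wedge\theta\wedge\Phi\bigr)$. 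Then I want to identify $\star(X^\flat\wedge\theta\wedge\Phi)$: writing $\eta=X^\flat\wedge\theta$, this is $\star(\Phi\wedge\eta)=\eta\lrcorner\Phi$ by the relation $\star(\Phi\wedge\eta)=\eta\lrcorner\Phi$ quoted just after the definition of ${\rm Spin}(7)$. So $X\lrcorner\star(\theta\wedge\Phi)=-(X^\flat\wedge\theta)\lrcorner\Phi$.

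The next step is to compute the $\Lambda^2_7$-component of $(X^\flat\wedge\theta)\lrcorner\Phi$. For a $2$-form $\omega$, the element $\omega\lrcorner\Phi=\star(\Phi\wedge\omega)$ acts as the eigenvalue $3$ on $\Lambda^2_7$ and $-1$ on $\Lambda^2_{21}$ (by the eigenvalue descriptions of $\Lambda^2_7$ and $\Lambda^2_{21}$). Hence if $\omega=\omega_7+\omega_{21}$ then $\omega\lrcorner\Phi=3\omega_7-\omega_{21}$, so $(\omega\lrcorner\Phi)_7=3\omega_7=3(\omega)_7$. Applying this with $\omega=X^\flat\wedge\theta$ yields $\bigl((X^\flat\wedge\theta)\lrcorner\Phi\bigr)_7=3(X^\flat\wedge\theta)_7$. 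Plugging back:
\begin{equation*}
\xi_X=-\frac12(X\lrcorner\delta\Phi)_7-\frac{7}{12}\cdot(-1)\cdot 3\,(X^\flat\wedge\theta)_7=-\frac12(X\lrcorner\delta\Phi)_7+\frac{7}{4}(X^\flat\wedge\theta)_7,
\end{equation*}
which is the first claimed formula.

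For the second formula, I would split $\delta\Phi=(\delta\Phi)_8+(\delta\Phi)_{48}$ in the first formula and handle the term $(X\lrcorner(\delta\Phi)_8)_7$. By Lemma \ref{lem:Tdecmposition}, $(\delta\Phi)_8=-\star(\Phi\wedge\theta)$, which is exactly $\star(\theta\wedge\Phi)$ up to sign bookkeeping ($\theta\wedge\Phi=\Phi\wedge\theta$ since $\Phi$ is even degree), so $(\delta\Phi)_8=-\star(\theta\wedge\Phi)$. The contraction $X\lrcorner(\delta\Phi)_8$ is then the same object computed above: $X\lrcorner(\delta\Phi)_8=(X^\flat\wedge\theta)\lrcorner\Phi$, hence $(X\lrcorner(\delta\Phi)_8)_7=3(X^\flat\wedge\theta)_7$. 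Therefore
\begin{equation*}
-\frac12(X\lrcorner\delta\Phi)_7=-\frac12(X\lrcorner(\delta\Phi)_{48})_7-\frac32(X^\flat\wedge\theta)_7,
\end{equation*}
and combining with the $+\tfrac74(X^\flat\wedge\theta)_7$ term gives $-\tfrac12(X\lrcorner(\delta\Phi)_{48})_7+\tfrac14(X^\flat\wedge\theta)_7$, as claimed. The main obstacle is getting every sign right in the chain of Hodge-star and interior-product identities \eqref{eq:A0}--\eqref{eq:A1} together with the degree parities, and making sure the eigenvalue normalization $\star(\Phi\wedge\omega)=3\omega$ on $\Lambda^2_7$ is applied with the correct factor; once the bookkeeping is pinned down the computation is purely formal. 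An alternative cross-check, which I would include if space permits, is to verify the second formula independently by starting from $T^c=\tfrac16(\delta\Phi)_8-(\delta\Phi)_{48}$ and feeding it into \eqref{eq:inttorsioncharacteristictorsion}, using $(X\lrcorner(\delta\Phi)_8)_7=3(X^\flat\wedge\theta)_7$ again.
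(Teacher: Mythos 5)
Your proposal is correct and follows essentially the same route as the paper: both start from $\xi_X=\tfrac12(X\lrcorner T^c)_7$ with $T^c=-\delta\Phi-\tfrac76\star(\theta\wedge\Phi)$, reduce the term $X\lrcorner\star(\theta\wedge\Phi)$ to $(X^\flat\wedge\theta)\lrcorner\Phi$ up to sign, and extract its $\Lambda^2_7$--part (the paper via the explicit projection $\pi_7=\tfrac14(\mathrm{id}+\cdot\lrcorner\Phi)$ and identities \eqref{eq:A2}, \eqref{eq:A6}; you via the equivalent eigenvalue description of $\Lambda^2_7$ and $\Lambda^2_{21}$), and the passage to the second formula by splitting off $(\delta\Phi)_8=-\star(\Phi\wedge\theta)$ is identical. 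All signs and coefficients in your computation check out.
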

\begin{proof}
By \eqref{eq:inttorsioncharacteristictorsion} using the formula for the projection $\pi_7$ and the characteristic torsion $T^c$, we have
\begin{align*}
2\xi_X &=\pi_7(X\lrcorner T^c) &\\
&=-(X\lrcorner\delta\Phi)_7-\frac{7}{24}\left(X\lrcorner\star(\theta\wedge\Phi)+\star((X\lrcorner\star(\theta\wedge\Phi))\wedge\Phi)\right).
\end{align*}
We will rewrite last two components. Firstly, by \eqref{eq:A1} and \eqref{eq:A2} we have
\begin{equation*}
X\lrcorner\star(\theta\wedge\Phi)=X\lrcorner\theta^{\sharp}\lrcorner\Phi=-3(X^{\flat}\wedge\theta)_7+(X^{\flat}\wedge\theta)_{21}.
\end{equation*} 
Further, by \eqref{eq:A6}
\begin{align*}
\star((X\lrcorner\star(\theta\wedge\Phi))\wedge\Phi) &=\star((X\lrcorner\theta^{\sharp}\lrcorner\Phi)\wedge\Phi)\\
&=-9(X^{\flat}\wedge\theta)_7-(X^{\flat}\wedge\theta)_{21}.
\end{align*}
Putting all together, we get
\begin{align*}
2\xi_X &=-(X\lrcorner\delta\Phi)_7-\frac{7}{24}(-3(X^{\flat}\wedge\theta)_7+(X^{\flat}\wedge\theta)_{21}-9(X^{\flat}\wedge\theta)_7-(X^{\flat}\wedge\theta)_{21})\\
&=-(X\lrcorner\delta\Phi)_7+\frac{7}{2}(X^{\flat}\wedge\theta)_7.
\end{align*}

Finally, by Lemma \ref{lem:Tdecmposition} and \eqref{eq:A2}, we have
\begin{align*}
X\lrcorner\delta\Phi &=X\lrcorner(\delta\Phi)_8+X\lrcorner(\delta\Phi)_{48}=-X\lrcorner\star(\theta\wedge\Phi)+X\lrcorner(\delta\Phi)_{48}\\
&=-X\lrcorner\theta^{\sharp}\lrcorner\Phi+X\lrcorner(\delta\Phi)_{48}\\
&=3(X^{\flat}\wedge\theta)_7-(X^{\flat}\wedge\theta)_{21}+X\lrcorner(\delta\Phi)_{48},
\end{align*}
which proves the last part.
\end{proof}

\begin{cor}
For each pure class $\mathcal{W}_8, \mathcal{W}_{48}$ the intrinsic torsion simplifes to
\begin{align*}
\mathcal{W}_{8} &:\, \xi_X=\frac{1}{4}(X^{\flat}\wedge\theta)_7,\\
\mathcal{W}_{48} &:\, \xi_X=-\frac{1}{2}(X\lrcorner\delta\Phi)_7.
\end{align*}
\end{cor}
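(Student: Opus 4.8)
The plan is to specialize the second description of the intrinsic torsion from Proposition~\ref{prop:intrinsictorsion}, namely
\begin{equation*}
\xi_X=-\tfrac{1}{2}(X\lrcorner(\delta\Phi)_{48})_7+\tfrac{1}{4}(X^{\flat}\wedge\theta)_7,
\end{equation*}
to each pure class, using the characterizations recalled just before Lemma~\ref{lem:Tdecmposition}: the class $\mathcal{W}_8$ is defined by $d\Phi=\theta\wedge\Phi$, equivalently $\delta\Phi\in\Lambda^3_8$, i.e. $(\delta\Phi)_{48}=0$; the class $\mathcal{W}_{48}$ is defined by $\theta=0$, equivalently $\delta\Phi\in\Lambda^3_{48}$, i.e. $\delta\Phi=(\delta\Phi)_{48}$ and $(\delta\Phi)_8=0$.

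For the $\mathcal{W}_8$ case, I would substitute $(\delta\Phi)_{48}=0$ directly into the second formula, leaving only $\xi_X=\tfrac14(X^\flat\wedge\theta)_7$. For the $\mathcal{W}_{48}$ case it is cleanest to use the first description, $\xi_X=-\tfrac12(X\lrcorner\delta\Phi)_7+\tfrac74(X^\flat\wedge\theta)_7$: since $\theta=0$ the second term vanishes and one is left with $\xi_X=-\tfrac12(X\lrcorner\delta\Phi)_7$. (One may alternatively start from the second formula and note that $\theta=0$ forces $\delta\Phi=(\delta\Phi)_{48}$, giving the same result, which is a small consistency check that the two descriptions in the Proposition agree on this class.)

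Strictly speaking there is nothing left to prove beyond these substitutions, so there is no real obstacle; the only point deserving a word is why $(\delta\Phi)_{48}=0$ is equivalent to $\Phi\wedge\delta\Phi=0$ up to the Lee form — but this is precisely the content of the class descriptions and of Lemma~\ref{lem:Tdecmposition} already established, so it may simply be cited. If one wanted to be fully self-contained, the mild subtlety would be confirming that for $\mathcal{W}_8$ the term $(X\lrcorner(\delta\Phi)_{48})_7$ really is identically zero as a $7$-dimensional component and not merely as a $3$-form, but this is immediate since $(\delta\Phi)_{48}$ itself vanishes.

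\begin{proof}
This follows immediately from Proposition \ref{prop:intrinsictorsion} together with the description of the pure classes. For a $\mathcal{W}_8$--structure we have $\delta\Phi\in\Lambda^3_8$, hence $(\delta\Phi)_{48}=0$, and the second formula in Proposition \ref{prop:intrinsictorsion} reduces to $\xi_X=\tfrac14(X^\flat\wedge\theta)_7$. For a $\mathcal{W}_{48}$--structure the Lee form vanishes, $\theta=0$, so the first formula in Proposition \ref{prop:intrinsictorsion} reduces to $\xi_X=-\tfrac12(X\lrcorner\delta\Phi)_7$.
\end{proof}
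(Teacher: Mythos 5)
Your proof is correct and matches the paper's (implicit) argument: the corollary is stated there without proof precisely because it follows by the same direct substitution of the pure-class conditions $(\delta\Phi)_{48}=0$ (for $\mathcal{W}_8$) and $\theta=0$ (for $\mathcal{W}_{48}$) into the two formulas of Proposition \ref{prop:intrinsictorsion}. Your choice of which of the two equivalent formulas to specialize in each case is exactly the natural one, and no further justification is needed.
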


\subsection{Intrinsic torsion in the literature}

The formula for intrinsic torsion has been established in an explicit form in \cite{CI}, \cite{SK2} and \cite{MM}. We do not focus on the approach from \cite{MM} since it uses the spinorial approach omitted here. We recall the first two approaches and outline the equivalence of these formulas with the one obtained in this note.

\subsubsection{First formula} Let us begin with the formula by Cabrera and Ivanov (Section 3 in \cite{CI}). It takes the form
\begin{equation*}
g(\xi_XY,Z)=\frac{1}{4}\bar{r}(\nabla\Phi)(X,Y,Z)
\end{equation*}
where $\bar{r}(\nabla\Phi)(X,Y,Z)=\frac{1}{8}\skal{X\lrcorner\nabla\Phi}{Y^{\flat}\wedge Z\lrcorner\Phi-Z^{\flat}\wedge Y\lrcorner\Phi}$ and $\skal{\cdot}{\cdot}$ is an inner product induced from the Riemannian metric $g$. It can be shown that this formula coincides with the formulas in Proposition \ref{prop:intrinsictorsion}. Let us provide a proof.

It was shown in \cite{CI} that, alternatively, we may write
\begin{equation*}
g(\xi_XY,Z)=\frac{1}{8}\skal{X\lrcorner d\Phi}{Y\wedge Z\lrcorner\Phi-Z\wedge Y\lrcorner\Phi}-\frac{7}{16}(X^{\flat}\wedge\theta)(Y,Z).
\end{equation*}
Since $\star(X\lrcorner d\Phi)=-X^{\flat}\wedge\delta\Phi$, thus
\begin{equation}\label{eq:CIequivalence}
\skal{X\lrcorner d\Phi}{Y\wedge Z\lrcorner\Phi-Z\wedge Y\lrcorner\Phi}=-\star(X^{\flat}\wedge\delta\Phi\wedge(Y\wedge Z\lrcorner\Phi-Z\wedge Y\lrcorner\Phi)).
\end{equation}
Moreover, for an $8$--form $\Omega$ we have
\begin{equation*}
\star(Y\lrcorner X\lrcorner\Omega)=(\star\Omega)X^{\flat}\wedge Y^{\flat}.
\end{equation*}
Replacing $\Omega$ by the right hand side of \eqref{eq:CIequivalence}, after long computations, we get
\begin{align*}
\xi_X &=\sum_{i,j}\skal{\xi_Xe_i}{e_i}e^{ij}\\
&=-\frac{1}{8}\star(\delta\Phi\wedge(X\lrcorner\Phi)-X\wedge(\delta\Phi\bullet\Phi)),
\end{align*}
where $\delta\Phi\bullet\Phi=\sum_i (e_i\lrcorner\delta\Phi)\wedge(e_i\lrcorner\Phi)$ is a $5$--form. By the formula for $\Phi$ we see that for any $3$--form $\gamma$
\begin{equation*}
\gamma\bullet\Phi=7\theta\wedge\Phi-\star\gamma.
\end{equation*}
Taking $\gamma=\delta\Phi$ we obtain the third formula for the intrinsic torsion from Proposition \ref{prop:intrinsictorsion}.

\subsubsection{Second formula} Now, let us describe the formula for intrinsic torsion derived from \cite{SK2}. Firstly, consider the natural action of the group $SO(8)$ on the $4$-form $\Phi_0$ in $\mathbb{R}^8$. Taking the infinitesimal action of the corresponding Lie algebra $\alg{so}(8)$ we get \cite{SK2}
\begin{equation*}
D:\, A\cdot \Phi_0 =\frac{d}{dt}({\rm exp}(tA)\cdot \Phi)_{t=0}=\sum_{i,m}a^m_i dx^i\wedge(\frac{\partial}{\partial x_m}\lrcorner\Phi),
\end{equation*}
where $A=(a^m_i)\in\alg{so}(8)$. We induce this action on $2$--tensors via $\Phi$ and the metric tensor $g$:
\begin{equation*}
D:\, A\cdot \Phi=\sum_{i,j,m}A_{ij}g^{jm} e^i\wedge(e_m\lrcorner\Phi),
\end{equation*}  
where $A=(A_{ij})$ is a tensor of type $(0,2)$ written in local orthonormal coordinates $(e_i)$. It can be shown that $D(\Lambda^2_7)=\Lambda^4_7$. Moreover, since $\nabla_X\Phi\in\Lambda^4_7$ for any vector field $X$, there is a $3$--tensor $\xi$ such that
\begin{equation*}
D(\xi_X)=\nabla_X\Phi.
\end{equation*}
Karigiannis calls $\xi$ the torsion. By the definition of intrinsic torsion and the fact that $\nabla^{{\rm Spin}(7)}\Phi=0$, it follows that $\xi$ is, in fact, intrinsic torsion.

\section{Non-existence of (second order) parallel locally conformal parallel ${\rm Spin}(7)$ structures}

In this section, we study ${\rm Spin}(7)$--structures of pure type $\mathcal{W}_8$ which are parallel in two ways described below:
\begin{itemize}
    \item {\it with parallel intrinsic torsion}: $\nabla^{{\rm Spin}(7)}\xi=0$,
    \item {\it second order nearly parallel}: $\nabla^{{\rm Spin}(7)}r=0$, 
\end{itemize}
where $\nabla^{{\rm Spin}(7)}$ is a minimal ${\rm Spin}(7)$--connection and $r$ is a symmetric $2$--tensor defined by
\begin{equation}\label{eq:tensor_r}
    r(X,Y) = \sum_i g(\xi_Xe_i,\xi_Ye_i).
\end{equation}

The second type of parallelism has been considered by the author \cite{KN2} in a general context of $G$--structures. It appears that this condition is quite natural and many geometric structures, such as the nearly K"ahler, nearly parallel $G_2$ or Kenmotsu manifolds, satisfy this condition.

We focus on locally conformal parallel ${\rm Spin}(7)$--structures, that is, we assume that the intrinsic torsion equals
\begin{equation*}
    \xi_XY = \frac{1}{4}(X^{\flat}\wedge\theta)_7.
\end{equation*}
This amounts to the fact that
\begin{equation}\label{eq:int_tor_w8}
    \xi_XY = \frac{1}{16}(g(X,Y)\theta - \theta(Y)X + \Phi(X,\theta,Y)).
\end{equation}

Let us begin with the first case, which is quite computational.

\begin{prop}
    A locally conformal parallel ${\rm Spin}(7)$--structure has parallel intrinsic torsion if and only if $\nabla^{{\rm Spin}(7)}_\theta=0$.
\end{prop}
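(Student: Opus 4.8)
The plan is to compute $\nabla^{{\rm Spin}(7)}\xi$ directly from the explicit formula \eqref{eq:int_tor_w8} for the $\mathcal{W}_8$ intrinsic torsion, and to show the vanishing of the derivative forces $\nabla^{{\rm Spin}(7)}\theta=0$. Since $\nabla^{{\rm Spin}(7)}=\nabla+\xi$ is metric and preserves $\Phi$, differentiating \eqref{eq:int_tor_w8} yields, for any vector field $W$,
\begin{equation*}
(\nabla^{{\rm Spin}(7)}_W\xi)_XY=\frac{1}{16}\bigl(g(X,Y)(\nabla^{{\rm Spin}(7)}_W\theta)-(\nabla^{{\rm Spin}(7)}_W\theta)(Y)\,X^{\flat}+\Phi(X,\nabla^{{\rm Spin}(7)}_W\theta,Y)\bigr),
\end{equation*}
where I abuse notation by identifying $\theta$ with $\theta^{\sharp}$ as convenient; the point is that the only term that can vary is $\theta$, because $g$ and $\Phi$ are $\nabla^{{\rm Spin}(7)}$--parallel. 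So $\nabla^{{\rm Spin}(7)}\xi$ is, up to the constant $\tfrac{1}{16}$, exactly the image of $\nabla^{{\rm Spin}(7)}\theta$ under the same linear map $\tau\mapsto\bigl(X,Y\mapsto g(X,Y)\tau-\tau(Y)X^{\flat}+\Phi(X,\tau,Y)\bigr)$ that appears in \eqref{eq:int_tor_w8}.

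The key algebraic point is then that this map is injective: it is (a multiple of) the composition of $\tau\mapsto W^{\flat}\wedge\tau$ with the projection $\pi_7\colon\Lambda^2\to\Lambda^2_7$ contracted appropriately, i.e. it sends $\tau$ to $(W^{\flat}\wedge\tau)_7$ read as a skew endomorphism. Concretely, fixing $W$ with $\tau=\nabla^{{\rm Spin}(7)}_W\theta$, the tensor $(\nabla^{{\rm Spin}(7)}_W\xi)_XY$ is $\tfrac{1}{4}(X^{\flat}\wedge\tau)_7$ evaluated on $Y$ — the same shape as $\xi$ itself with $\theta$ replaced by $\tau$. Since $\Lambda^1\to\Lambda^2_7$, $\alpha\mapsto(\text{the corresponding }\omega)$ is injective (indeed $\Lambda^2_7\cong\mathbb{R}^8$ as a ${\rm Spin}(7)$--module, and the map $\alpha\mapsto\alpha\wedge(\cdot)$ followed by $\pi_7$ realizes this isomorphism up to scale, which one can check by the identities \eqref{eq:A2}, \eqref{eq:A6} already used in the proof of Proposition \ref{prop:intrinsictorsion}), the vanishing of $\nabla^{{\rm Spin}(7)}_W\xi$ for all $W$ is equivalent to $\nabla^{{\rm Spin}(7)}_W\theta=0$ for all $W$. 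The converse direction is immediate from the displayed formula.

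The main obstacle I expect is bookkeeping with the projection $\pi_7$ and the sign/normalization conventions: one must verify cleanly that differentiating the $\pi_7$--projected expression commutes with $\nabla^{{\rm Spin}(7)}$ (which it does precisely because $\pi_7$ is built from $g$ and $\Phi$, both $\nabla^{{\rm Spin}(7)}$--parallel), and then that the resulting linear map on one--forms has trivial kernel. The injectivity can be extracted from \eqref{eq:isomorphism}: applying the recovery formula $\eta=-\tfrac{1}{7}\star(\Phi\wedge\star(\Phi\wedge\eta))$ shows that a one--form is determined by its associated $\Lambda^3_8$-- (equivalently $\Lambda^2_7$--) element, so no nonzero $\tau$ maps to zero. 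Once these two points are in place, chasing the constants gives the stated equivalence with no further input. I would also remark that, combined with the Weitzenböck/structure results for $\mathcal{W}_8$ quoted earlier, $\nabla^{{\rm Spin}(7)}\theta=0$ is itself very restrictive (it forces $\theta$ to have constant length and, together with the curvature constraints, leads to the non--existence statement announced in the introduction), but that is the content of the next results rather than of this proposition.
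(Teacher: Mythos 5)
Your first step coincides with the paper's: differentiate \eqref{eq:int_tor_w8} with the Leibniz rule, use that $g$ and $\Phi$ are $\nabla^{{\rm Spin}(7)}$--parallel to conclude that $(\nabla^{{\rm Spin}(7)}_Z\xi)_XY$ is the same expression with $\theta$ replaced by $\tau=\nabla^{{\rm Spin}(7)}_Z\theta$, and then reduce the claim to the injectivity of $\tau\mapsto\{X\mapsto(X^{\flat}\wedge\tau)_7\}$. That display and the converse direction are fine.

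The injectivity step, however, is not justified as written. $\Lambda^2_7$ is $7$--dimensional, so it is \emph{not} isomorphic to $\Lambda^1\cong\mathbb{R}^8$ as a ${\rm Spin}(7)$--module, and for a \emph{fixed} $X$ the map $\tau\mapsto(X^{\flat}\wedge\tau)_7$ cannot be injective: its kernel contains $X^{\flat}$, since $(X^{\flat}\wedge X^{\flat})_7=0$. The identity \eqref{eq:isomorphism} you invoke shows that $\alpha\mapsto\star(\Phi\wedge\alpha)$ is an isomorphism $\Lambda^1\to\Lambda^3_8$ between two $8$--dimensional modules; it is a different map and does not deliver the injectivity you need. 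What is true, and what the argument requires, is that $\tau\mapsto\{X\mapsto(X^{\flat}\wedge\tau)_7\}$ with $X$ \emph{varying} has trivial kernel. The cheapest way to see this is the paper's: set $X=Y$ in your displayed formula (the $\Phi$--term vanishes by skew--symmetry of $\Phi$), which gives $|X|^2\tau-\tau(X)X=0$, and then choose $X$ orthogonal to $\tau$ to conclude $\tau=0$. Alternatively, contract over $X$ as in the proof of Lemma \ref{lem:characteristicvectorfield}: $\sum_i e_i\lrcorner(e^i\wedge\tau)_7$ is a nonzero multiple of $\tau$, so the vanishing of all $(X^{\flat}\wedge\tau)_7$ forces $\tau=0$. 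With either substitution your proof closes; without it, the key step does not stand.
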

\begin{proof}
Since $(\nabla^{{\rm Spin}(7)}_Z\xi)_XY = \nabla^{{\rm Spin}(7)}_Z(\xi_XY)-\xi_{\nabla^{{\rm Spin}(7)}_ZX}Y-\xi_X(\nabla^{{\rm Spin}(7)}_ZY)$, after some calculations and using the fact that $\nabla^{{\rm Spin}(7)}\Phi=0$, we get
\begin{equation*}
    (\nabla^{{\rm Spin}(7)}_Z\xi)_XY = \frac{1}{16}(g(X,Y)\nabla^{{\rm Spin}(7)}_Z\theta - (\nabla^{{\rm Spin}(7)}_Z\theta)(Y)X+\Phi(X,\nabla^{{\rm Spin}(7)}_Z\theta,Y)).
\end{equation*}
Clearly, if $\nabla^{{\rm Spin}(7)}\theta=0$, then $\nabla^{{\rm Spin}(7)}\xi=0$. Conversely, if $\nabla^{{\rm Spin}(7)}\xi=0$, then taking $X=Y$ we have
\begin{equation*}
    |X|^2\nabla^{{\rm Spin}(7)}_Z\theta - g(\nabla^{{\rm Spin}(7)}_Z\theta,X)X=0.
\end{equation*}
Choosing $X$ to be orthogonal to $\nabla^{{\rm Spin}(7)}_Z\theta$ we obtain $\nabla^{{\rm Spin}(7)}_Z\theta=0$ for any $Z$.
\end{proof}

By \cite[Proposition 3.3]{IPP} which states, among others, that on a compact locally conformal parallel ${\rm Spin}(7)$--structure the Lee form in necessary $\nabla$--parallel, we have the following immediate corollary.

\begin{cor}\label{cor:nonexistence}
    The compact locally conformal parallel ${\rm Spin}(7)$--structure cannot have parallel intrinsic torsion.
\end{cor}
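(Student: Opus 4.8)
\textbf{Proof proposal for Corollary \ref{cor:nonexistence}.}

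The plan is to combine the previous Proposition with the cited rigidity result from \cite{IPP}, and then derive a contradiction from a dimension/holonomy argument. First I would invoke the preceding Proposition: a locally conformal parallel ${\rm Spin}(7)$--structure has parallel intrinsic torsion precisely when $\nabla^{{\rm Spin}(7)}\theta=0$. So it suffices to show that on a compact manifold of pure type $\mathcal{W}_8$ the Lee form cannot be $\nabla^{{\rm Spin}(7)}$--parallel unless it vanishes — but if $\theta\equiv 0$ the structure is of type $\mathcal{W}_8\cap\mathcal{W}_{48}=\{0\}$, i.e.\ it is torsion-free (genuine ${\rm Spin}(7)$--holonomy), which is \emph{not} of pure type $\mathcal{W}_8$. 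Hence assume $\theta\neq 0$.

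Next I would feed in \cite[Proposition 3.3]{IPP}: on a compact locally conformal parallel ${\rm Spin}(7)$--structure the Lee form is automatically $\nabla$--parallel, $\nabla\theta=0$. Combined with $\nabla^{{\rm Spin}(7)}=\nabla+\xi$, parallelism of $\theta$ with respect to $\nabla^{{\rm Spin}(7)}$ is then equivalent to $\xi_X\theta=0$ for all $X$. Now I would compute $\xi_X\theta$ directly from the explicit $\mathcal{W}_8$ formula \eqref{eq:int_tor_w8}: setting $Y=\theta^\sharp$ gives
\begin{equation*}
\xi_X\theta^\sharp=\frac{1}{16}\bigl(\theta(X)\,\theta^\sharp-|\theta|^2 X+\Phi(X,\theta,\theta)^\sharp\bigr).
\end{equation*}
The last term vanishes by skew-symmetry of $\Phi$ in its first and third slots (both are $\theta$), so $\xi_X\theta^\sharp=\frac{1}{16}(\theta(X)\theta^\sharp-|\theta|^2X)$. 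Requiring this to vanish for all $X$ — in particular for $X$ orthogonal to $\theta^\sharp$, which is possible since $\dim M=8>1$ and $\theta\neq 0$ — forces $|\theta|^2=0$, a contradiction. (Note $|\theta|$ is constant here because $\nabla\theta=0$, so it is legitimate to treat it as a nonzero number.)

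The main obstacle, such as it is, is simply making sure the logical hygiene is right: one must be careful that "pure type $\mathcal{W}_8$" genuinely excludes the torsion-free case, so that $\theta\not\equiv 0$ can be assumed; and one must confirm that \cite[Proposition 3.3]{IPP} really gives $\nabla$-parallelism of $\theta$ on the \emph{compact} locally conformal parallel structure (equivalently, $|\theta|$ is a nonzero constant), which is exactly what is quoted in the text. Once those two facts are in hand, the contradiction is immediate from the algebraic identity above, with no further computation required. Alternatively, one can bypass the explicit computation of $\xi_X\theta$ entirely: $\nabla^{{\rm Spin}(7)}\theta=0$ and $\nabla\theta=0$ together give $\xi_X\theta=0$, and since $\xi_X\in\alg{spin}(7)^\bot\subset\mathfrak{so}(8)$ acts on $\theta^\sharp$ as the skew endomorphism from \eqref{eq:int_tor_w8}, the kernel condition again collapses to $|\theta|=0$; I would present whichever version is shortest given the identities already available in the paper.
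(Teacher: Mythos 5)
Your proposal is correct and takes essentially the same route as the paper: reduce to $\nabla^{{\rm Spin}(7)}\theta=0$ via the preceding Proposition, import $\nabla\theta=0$ from \cite[Proposition 3.3]{IPP}, and substitute the explicit $\mathcal{W}_8$ formula \eqref{eq:int_tor_w8} to force $|\theta|^2 g=\theta\otimes\theta$ and hence $\theta=0$. The only cosmetic differences are that the paper evaluates $\theta(\xi_ZX)$ where you evaluate $\xi_X\theta^{\sharp}$ (equivalent by skew--symmetry of $\xi_X$), and that you make explicit the exclusion of the degenerate case $\theta\equiv 0$, which the paper leaves implicit.
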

\begin{proof}
    We have
    \begin{align*}
        (\nabla^{{\rm Spin}(7)}_Z\theta)X &= (\nabla_Z\theta)X -\theta(\xi_ZX)\\
        &=(\nabla_Z\theta)X-\frac{1}{16}(g(X,Z)|\theta|^2 - \theta(X)\theta(Z)).
    \end{align*}
    By assumptions $\nabla_Z\theta=0$ and $\nabla^{{\rm Spin}(7)}_Z\theta=0$. Thus $g(X,Z)|\theta|^2=\theta(X)\theta(Z)$ for any $X,Z$, which is impossible.
\end{proof}

 Let us move to the second notion of parallelism. 

\begin{thm}
A locally conformal parallel ${\rm Spin}(7)$--structure is second order nearly parallel if and only if $\nabla^{{\rm Spin}(7)}\theta=0$.
\end{thm}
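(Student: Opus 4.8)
The plan is to compute $\nabla^{{\rm Spin}(7)}r$ directly from the expression for $r$ in terms of the locally conformal parallel intrinsic torsion \eqref{eq:int_tor_w8}, and to show that the vanishing of $\nabla^{{\rm Spin}(7)}r$ forces $\nabla^{{\rm Spin}(7)}\theta = 0$. First I would insert \eqref{eq:int_tor_w8} into the definition \eqref{eq:tensor_r} of $r$ and carry out the contraction $\sum_i g(\xi_X e_i, \xi_Y e_i)$. Each $\xi_X e_i$ is a linear combination of $g(X,e_i)\theta$, $\theta(e_i)X$, and the $\Phi$-term $\Phi(X,\theta,e_i)$; multiplying two such expressions and summing over $i$ produces terms of the shape $|\theta|^2 g(X,Y)$, $\theta(X)\theta(Y)$, and contractions of $\Phi$ with itself of the form $\sum_i \Phi(X,\theta,e_i)\Phi(Y,\theta,e_i)$. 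For the last one I would use the standard quadratic identity for $\Phi$ (contracting two slots), which re-expresses it again in terms of $|\theta|^2 g(X,Y)$, $\theta(X)\theta(Y)$, and possibly a further $\Phi$-term that vanishes by skew-symmetry after the symmetric contraction. The upshot should be a clean formula $r(X,Y) = a\,|\theta|^2 g(X,Y) + b\,\theta(X)\theta(Y)$ for explicit rational constants $a,b$ with $b\neq 0$.

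Next I would differentiate this expression using $\nabla^{{\rm Spin}(7)}g = 0$ (it is a metric connection) and $\nabla^{{\rm Spin}(7)}\Phi = 0$, so that all derivatives fall on $\theta$ and on the scalar $|\theta|^2$. Writing $\vartheta := \nabla^{{\rm Spin}(7)}_Z\theta$ and noting $Z|\theta|^2 = 2\,g(\vartheta^\sharp,\theta^\sharp)$, one gets
\begin{equation*}
(\nabla^{{\rm Spin}(7)}_Z r)(X,Y) = 2a\,g(\vartheta^\sharp,\theta^\sharp)\,g(X,Y) + b\big(\vartheta(X)\theta(Y) + \theta(X)\vartheta(Y)\big).
\end{equation*}
Setting this to zero and choosing $X = Y$ orthogonal to both $\theta^\sharp$ and $\vartheta^\sharp$ kills the last two terms and yields $g(\vartheta^\sharp,\theta^\sharp) = 0$; then the remaining equation $b(\vartheta(X)\theta(Y)+\theta(X)\vartheta(Y)) = 0$ with $b\neq 0$, tested on $X=\theta^\sharp$, $Y$ arbitrary, gives $|\theta|^2\,\vartheta(Y) = 0$, hence $\nabla^{{\rm Spin}(7)}_Z\theta = 0$ wherever $\theta\neq 0$ (and trivially where $\theta\equiv 0$). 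The converse is immediate: if $\nabla^{{\rm Spin}(7)}\theta = 0$, every term above vanishes.

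The main obstacle I anticipate is the $\Phi$-quadratic contraction $\sum_i \Phi(X,\theta,e_i)\,\Phi(Y,\theta,e_i)$: one must know (or rederive, e.g. from the explicit form \eqref{eq:Phi} or from the known quadratic relations for the $G_2$- and $Spin(7)$-forms) the precise identity expressing it through $g$ and $\theta$, and keep track of the correct numerical coefficients coming from the $\tfrac{1}{16}$ normalization in \eqref{eq:int_tor_w8}. A secondary point is to confirm $b\neq 0$, which is what makes the implication go through; this follows from the explicit constants but should be checked carefully. Everything else is routine linear algebra in a fixed fiber plus the two parallelism facts $\nabla^{{\rm Spin}(7)}g=0$ and $\nabla^{{\rm Spin}(7)}\Phi=0$.
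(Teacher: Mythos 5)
Your proposal is correct and follows essentially the same route as the paper: the quadratic contraction you flag as the main obstacle is supplied by Corollary 6.22 of Salamon--Walpuski, namely $\sum_i \Phi(X,\theta,e_i)\Phi(Y,\theta,e_i)=-6\left(g(X,Y)|\theta|^2+\theta(X)\theta(Y)\right)$, which yields $r=-\tfrac{1}{64}\left(|\theta|^2 g-\theta\otimes\theta\right)$, so your constant $b$ equals $\tfrac{1}{64}\neq 0$ and the implication goes through exactly as you describe. The only point worth making explicit is that your first step already gives $Z|\theta|^2=0$ for all $Z$, so $|\theta|^2$ is locally constant and on each connected component $\theta$ is either identically zero or nowhere zero, which disposes of the ``wherever $\theta\neq 0$'' caveat.
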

\begin{proof}
Let us first compute $r$. We have
\begin{align*}
r(X,Y) &= \sum_ig(\xi_Xe_i,\xi_Ye_i) \\
&=\frac{1}{256}\sum_i g(g(X,e_i)\theta-\theta(e_i)X+\Phi(X,\theta,e_i),g(Y,e_i)\theta-\theta(e_i)Y+\Phi(Y,\theta,e_i))\\
&=\frac{1}{256}(2g(X,Y)\|\theta|^2-2\theta(X)\theta(Y)+\sum_i g(\Phi(X,\theta,e_i)\Phi(Y,\theta,e_i))).
\end{align*}
Let us simplify the last summand. Using Corollary 6.22 in \cite{SW} we have
\begin{equation*}
    \sum_i g(\Phi(X,\theta,e_i)\Phi(Y,\theta,e_i)) = -6(g(X,Y)|\theta|^2 + \theta(X)\theta(Y)).
\end{equation*}
Finally,
\begin{equation*}
    r(X,Y) = -\frac{1}{64}(|\theta|^2g(X,Y)-\theta(X)\theta(Y)).
\end{equation*}
Therefore,
\begin{align*}
    (\nabla^{{\rm Spin}(7)}_Zr)(X,Y) &=Z r(X,Y)-r(\nabla^{{\rm Spin}(7)}_ZX,Y)-r(X,\nabla^{{\rm Spin}(7)}_ZY)\\
    &=-\frac{1}{64}((Z|\theta|^2)g(X,Y)-(\nabla^{{\rm Spin}(7)}_Z\theta)(X)\theta(Y)-\theta(X)(\nabla^{{\rm Spin}(7)}_Z\theta)(Y)).
\end{align*}

Assume $\nabla^{{\rm Spin}(7)}r=0$. Substituting $X=Y$ orthogonal to $\theta$ we get that $|\theta|^2$ is a constant. Furthermore, taking $X=\theta$ and $Y$ orthogonal to $\theta$ and later $X=Y=\theta$ we arrive at $\nabla^{{\rm Spin}(7)}\theta=0$. Conversely, assume $\nabla^{{\rm Spin}(7)}\theta=0$. Then
\begin{equation*}
    (\nabla^{{\rm Spin}(7)}_Zr)(X,Y) = -\frac{1}{64}(Z|\theta|^2)g(X,Y).
\end{equation*}
Moreover, for any $Z$ we have
\begin{equation*}
    Z|\theta|^2 = 2g(\nabla^{{\rm Spin}(7)}_Z\theta,\theta) = 0.
\end{equation*}
This implies $\nabla^{{\rm Spin}(7)}r=0$.
\end{proof}

Analogously as Corollary \ref{cor:nonexistence} we have:

\begin{cor}
    There is no compact locally conformal parallel ${\rm Spin}(7)$--structure which is second order nearly parallel.
\end{cor}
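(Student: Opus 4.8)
The plan is to combine the previous corollary with the theorem immediately preceding this statement. By the theorem, a locally conformal parallel ${\rm Spin}(7)$--structure is second order nearly parallel if and only if $\nabla^{{\rm Spin}(7)}\theta=0$. On the other hand, Corollary \ref{cor:nonexistence} already established that a compact locally conformal parallel ${\rm Spin}(7)$--structure cannot satisfy $\nabla^{{\rm Spin}(7)}\theta=0$: indeed, if $M$ is compact and locally conformal parallel, then by \cite[Proposition 3.3]{IPP} the Lee form is $\nabla$--parallel, and then the displayed identity
\begin{equation*}
(\nabla^{{\rm Spin}(7)}_Z\theta)X=(\nabla_Z\theta)X-\tfrac{1}{16}\bigl(g(X,Z)|\theta|^2-\theta(X)\theta(Z)\bigr)
\end{equation*}
forces $g(X,Z)|\theta|^2=\theta(X)\theta(Z)$ for all $X,Z$, which is impossible unless $\theta\equiv 0$ (and $\theta\equiv 0$ would make the structure parallel, not of pure type $\mathcal{W}_8$).

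So the proof is essentially a one-line deduction: suppose, for contradiction, that $M$ is a compact locally conformal parallel ${\rm Spin}(7)$--structure which is second order nearly parallel. By the theorem, $\nabla^{{\rm Spin}(7)}\theta=0$. But this contradicts Corollary \ref{cor:nonexistence} (or rather its proof, since the corollary is phrased in terms of parallel intrinsic torsion — however both reduce to the same obstruction $\nabla^{{\rm Spin}(7)}\theta=0$). Concretely I would write: assume $\nabla^{{\rm Spin}(7)}\theta=0$; compactness and \cite[Proposition 3.3]{IPP} give $\nabla\theta=0$; subtracting the two using the formula for $\xi_Z$ in the $\mathcal{W}_8$ case yields $g(X,Z)|\theta|^2=\theta(X)\theta(Z)$ for all $X,Z$, which fails (e.g. take $X=Z$ a unit vector orthogonal to $\theta$, giving $|\theta|^2=0$, whence the structure is parallel — excluded).

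I do not expect any genuine obstacle here; the content of the corollary is entirely carried by the theorem and by Corollary \ref{cor:nonexistence}, both of which are available. The only point requiring a little care is making sure the logical chain is stated cleanly — in particular noting that $\theta\equiv 0$ is excluded because a pure $\mathcal{W}_8$ structure with vanishing Lee form is parallel, hence not of the type under consideration. Thus the proof reads:

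\begin{proof}
Assume $M$ is a compact locally conformal parallel ${\rm Spin}(7)$--structure which is second order nearly parallel. By the previous theorem, $\nabla^{{\rm Spin}(7)}\theta=0$. Since $M$ is compact and locally conformal parallel, \cite[Proposition 3.3]{IPP} implies $\nabla\theta=0$. As in the proof of Corollary \ref{cor:nonexistence}, combining these with the formula for the intrinsic torsion in the $\mathcal{W}_8$ case gives
\begin{equation*}
0=(\nabla^{{\rm Spin}(7)}_Z\theta)X=(\nabla_Z\theta)X-\frac{1}{16}\bigl(g(X,Z)|\theta|^2-\theta(X)\theta(Z)\bigr)=-\frac{1}{16}\bigl(g(X,Z)|\theta|^2-\theta(X)\theta(Z)\bigr)
\end{equation*}
for all $X,Z$. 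Taking $X=Z$ a unit vector orthogonal to $\theta$ yields $|\theta|^2=0$, so the structure is parallel, contradicting that it is of pure type $\mathcal{W}_8$.
\end{proof}
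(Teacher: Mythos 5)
Your proposal is correct and follows essentially the same route as the paper, which simply says the corollary is proved ``analogously'' to Corollary \ref{cor:nonexistence}: apply the preceding theorem to reduce second order nearly parallel to $\nabla^{{\rm Spin}(7)}\theta=0$, then invoke \cite[Proposition 3.3]{IPP} and the identity $g(X,Z)|\theta|^2=\theta(X)\theta(Z)$ to reach a contradiction. Your explicit handling of the degenerate case $\theta\equiv 0$ is a small improvement in care over the paper's terse ``which is impossible,'' but it is not a different method.
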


\section{A new approach to well known divergence formulas}

We will use the divergence and integral formula obtained by the author in \cite{KN} for a general $G$--structure to derive well known formulas in the case of ${\rm Spin}(7)$--structures. Let $(M,g,\Phi)$ be a ${\rm Spin}(7)$--structure, $\xi$ its intrinsic torsion. We may decompose the Riemann curvature tensor $R$ with respect to the decomposition \eqref{eq:so8decomposition} as follows
\begin{equation*}
R(X,Y)=R(X,Y)_{\alg{spin}(7)}+R(X,Y)_{\alg{spin}(7)^{\bot}}.
\end{equation*}
Thus, we may define the $\alg{spin}(7)^{\bot}$--scalar curvature $s_{\alg{spin}(7)^{\bot}}=\sum_{i,j}g(R(e_i,e_j)_{\alg{spin}(7)^{\bot}}e_j,e_i)$. Moreover, we consider the scalar curvature $s^{{\rm Spin}(7)}$ of the curvature tensor $R^{{\rm Spin}(7)}$ of a connection $\nabla^{\rm Spin}$. We need three additional objects coming from the intrinsic torsion
\begin{equation*}
\xi^{\rm alt}_XY=\frac{1}{2}(\xi_XY-\xi_YX),\quad
\xi^{\rm sym}_XY=\frac{1}{2}(\xi_XY+\xi_YX),\quad
\chi=\sum_i \xi_{e_i}e_i.
\end{equation*}
We call $\chi$ the characteristic vector field. The divergence formulas taken from \cite{KN} in the case of a ${\rm Spin}(7)$--structure take the following forms
\begin{align}
2{\rm div}\,\chi &=s^{\rm Spin(7)}-s+|\chi|^2+|\xi^{\rm alt}|^2-|\xi^{\rm sym}|^2,\label{eq:divformula1}\\
{\rm div}\,\chi &=-\frac{1}{2}s_{\alg{spin}(7)^{\bot}}+|\chi|^2+|\xi^{\rm alt}|^2-|\xi^{\rm sym}|^2. \label{eq:divformula2}
\end{align}
Here $s$ is the scalar curvature of the Levi-Civita connection. Notice that
\begin{equation*}
|\xi^{\rm sym}|^2-|\xi^{\rm alt}|^2=\sum_{i,j}g(\xi_{e_i}e_j,\xi_{e_j}e_i).
\end{equation*}

\begin{rem}
Notice that above divergence formulas do not contain the following component
\begin{equation*}
\sum_{i,j}g([\xi_{e_i},\xi_{e_j}]_{{\rm spin}(7)^{\bot}}e_j,e_i),
\end{equation*}
which appears for general $G$--structure (replacing ${\rm Spin}(7)$ by $G$) \cite{KN}. This follows from the fact that in our situation it vanishes since $[\alg{spin}(7)^{\bot},\alg{spin}(7)^{\bot}]\subset\alg{spin}(7)$. 
\end{rem}

To derive above divergence formulas in the ${\rm Spin}(7)$ case, we will compute each component separately.  

\begin{lem}\label{lem:sspin7bot}
The scalar curvature $s_{\alg{spin}(7)^{\bot}}$ is equal to $\frac{1}{4}s$.
\end{lem}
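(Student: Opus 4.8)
The plan is to express the $\alg{spin}(7)^\bot$-component of the curvature operator in terms of the full Riemann tensor together with the representation-theoretic structure of the decomposition $\alg{so}(8)=\alg{spin}(7)\oplus\alg{spin}(7)^\bot$, and then take the relevant trace. Recall that $\alg{spin}(7)^\bot\cong\Lambda^2_7$ as a $\mathrm{Spin}(7)$-module, which in turn is isomorphic to the standard $8$-dimensional representation $\mathbb{R}^8$ (since $\dim\Lambda^2_7=7$... careful: $\Lambda^2_7$ is $7$-dimensional, matching $\alg{spin}(7)^\bot$). The orthogonal projection $\pi_{\alg{spin}(7)^\bot}:\Lambda^2\to\Lambda^2_7$ has the explicit form $\pi_{\alg{spin}(7)^\bot}(\omega)=\tfrac14(\omega+\omega\lrcorner\Phi)$ recorded in the excerpt. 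Applying this to $R(e_i,e_j)$, viewed as a $2$-form, and pairing with $e^{ij}$, gives
\begin{equation*}
s_{\alg{spin}(7)^\bot}=\sum_{i,j}g\big(R(e_i,e_j)_{\alg{spin}(7)^\bot}e_j,e_i\big)
=\tfrac14\sum_{i,j}\big(g(R(e_i,e_j)e_j,e_i)+g((R(e_i,e_j)\lrcorner\Phi)e_j,e_i)\big).
\end{equation*}
The first sum is just $s$, the scalar curvature. So the whole problem reduces to showing that the second sum equals $0$.

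The key step is therefore to evaluate $\sum_{i,j}g((R(e_i,e_j)\lrcorner\Phi)e_j,e_i)$. Identifying the $2$-form $R(e_i,e_j)=\sum_{k,l}R_{ijkl}\,e^{kl}/2$ (with suitable index conventions) and contracting with $\Phi$, this quantity becomes, up to a constant, a full contraction of the Riemann tensor $R_{ijkl}$ against the components $\Phi_{klmn}$ of the defining $4$-form — schematically $\sum R_{ijkl}\Phi_{klij}$ or a similar contraction. The plan is to argue that this contraction vanishes purely by symmetry: $R_{ijkl}$ is symmetric under the pair-swap $(ij)\leftrightarrow(kl)$ and antisymmetric within each pair, while $\Phi$ is totally antisymmetric, and the relevant contraction pattern forces an expression that is simultaneously symmetric and antisymmetric under an index transposition, hence zero. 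Concretely, one can use the first Bianchi identity $R_{i[jkl]}=0$ together with total antisymmetry of $\Phi$ to collapse the contraction, exactly as one shows that $\sum R_{ijkl}(\text{any totally antisymmetric }4\text{-tensor})_{ijkl}$ vanishes.

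An alternative, cleaner route avoids computing the contraction at all: one invokes the general fact (used implicitly in \cite{KN}) that for \emph{any} orthogonal decomposition $\alg{so}(n)=\alg{h}\oplus\alg{h}^\bot$ the associated curvature traces $s_{\alg{h}}$ and $s_{\alg{h}^\bot}$ are proportional to $s$ with constants determined by how the Casimir / projection acts, provided $R$ satisfies the Bianchi symmetries. Since the Levi-Civita curvature operator $R:\Lambda^2\to\Lambda^2$ is symmetric with respect to $g$ and satisfies the first Bianchi identity, its ``algebraic scalar'' is a multiple of $s$; the multiple for the $\Lambda^2_7$ part is $\dim\Lambda^2_7$ divided by the total contribution, i.e.\ it is governed by the coefficient of the projector. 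Using $\pi_{\alg{spin}(7)^\bot}=\tfrac14(\mathrm{id}+\,\cdot\lrcorner\Phi)$ and the fact that $\cdot\lrcorner\Phi$ has trace zero as an operator on $\Lambda^2$ when composed with the Bianchi-symmetric $R$, one gets the clean factor $\tfrac14$.

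The main obstacle I expect is the sign and normalization bookkeeping in the contraction $\sum_{i,j}g((R(e_i,e_j)\lrcorner\Phi)e_j,e_i)$: one must be careful about (a) the identification of a skew endomorphism with a $2$-form (factor of $2$ conventions), (b) the action of $\lrcorner$ on $\Phi$ versus the action of the $2$-form $\omega_\xi$ on vectors, and (c) ensuring the Bianchi identity is applied to the correct index triple so that the vanishing genuinely follows. Once the contraction is correctly set up, the vanishing is forced by antisymmetry, but assembling the constant $\tfrac14$ from $\pi_7=\tfrac14(\mathrm{id}+\lrcorner\Phi)$ requires confirming that only the identity part of the projector survives the trace against a Bianchi-symmetric curvature operator.
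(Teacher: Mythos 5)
Your proposal is correct and follows essentially the same route as the paper: apply the projector $\pi_7=\tfrac14(\mathrm{id}+\,\cdot\lrcorner\Phi)$ to $R(e_i,e_j)$, observe that the identity part traces to $\tfrac14 s$, and kill the remaining contraction $\sum R_{ijpq}\Phi_{pqji}$ by the first Bianchi identity combined with the total antisymmetry of $\Phi$. The only caveat is the momentary slip about $\Lambda^2_7\cong\mathbb{R}^8$, which you correct yourself and which plays no role in the argument.
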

\begin{proof}
By the definition of the projection $\pi_7$ we have
\begin{align*}
s_{\alg{spin}(7)^{\bot}} &=\frac{1}{4}g((R(e_i,e_j)+R(e_i,e_j)\lrcorner \Phi)e_j,e_i)\\
&=\frac{1}{4}\sum_{i,j}(g(R(e_i,e_j)e_j,e_i)+\frac{1}{4}\sum_{p,q}R_{ijpq}\varepsilon_{pqji},
\end{align*}
where $R_{ijpq}=g(R(e_i,e_j)e_p,e_q)$ and $\varepsilon_{pqji}=\Phi(e_p,e_q,e_j,e_i)$. By the Bianchi identity and a skew--symmetry of $\Phi$, second component vanishes. Thus, $s_{\alg{spin}(7)^{\bot}}=\frac{1}{4}s$.
\end{proof}

\begin{lem}\label{lem:characteristicvectorfield}
The characteristic vector field is equal to $\chi=\frac{7}{16}\theta^{\sharp}$.
\end{lem}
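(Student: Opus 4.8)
The plan is to compute $\chi=\sum_i\xi_{e_i}e_i$ directly from the explicit formula for the intrinsic torsion obtained in Proposition \ref{prop:intrinsictorsion}, using the second (cleaner) expression $\xi_X=-\frac{1}{2}(X\lrcorner(\delta\Phi)_{48})_7+\frac{1}{4}(X^{\flat}\wedge\theta)_7$. Since $\chi$ is a vector field, it is determined by its inner products with an arbitrary $Z$, so I would fix $Z$ and evaluate $g(\chi,Z)=\sum_i g(\xi_{e_i}e_i,Z)$, equivalently $\sum_i(\xi_{e_i})(e_i^{\flat}\wedge Z^{\flat})$ thinking of each $\xi_{e_i}$ as a $2$--form. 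The two summands of $\xi$ then contribute separately.

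For the second summand, $\frac{1}{4}\sum_i((e_i^{\flat}\wedge\theta)_7)(e_i,Z)$: I would write the $\pi_7$--projection as $\frac{1}{4}(\eta+\eta\lrcorner\Phi)$ for $\eta=e_i^{\flat}\wedge\theta$ and evaluate on $(e_i,Z)$. The $\eta$--piece gives $\frac{1}{4}\sum_i(e_i^{\flat}\wedge\theta)(e_i,Z)=\frac{1}{4}\sum_i(g(e_i,e_i)\theta(Z)-\theta(e_i)g(e_i,Z))=\frac{1}{4}(8-1)\theta(Z)=\frac{7}{4}\theta(Z)$. The $\eta\lrcorner\Phi$--piece involves $\sum_i\Phi(e_i,\theta^{\sharp},e_i,Z)$ which vanishes by skew--symmetry of $\Phi$ in the first and third slots. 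So the total contribution of the second summand is $\frac{1}{4}\cdot\frac{7}{4}\theta(Z)=\frac{7}{16}\theta(Z)$. For the first summand I expect the contribution to vanish: $-\frac{1}{2}\sum_i((e_i\lrcorner(\delta\Phi)_{48})_7)(e_i,Z)$ should reduce, after applying the $\pi_7$ formula, to a trace of $(\delta\Phi)_{48}$ over a repeated index together with a $\Phi$--contraction, both of which are zero — the former because $e_i\lrcorner e_i\lrcorner(\delta\Phi)_{48}=0$ by antisymmetry, and the latter because $\Lambda^3_{48}$ is the kernel of wedging with $\Phi$ (equivalently, the relevant $\Phi$--contractions of $\gamma_{48}$ vanish, cf. \eqref{eq:A5}). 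Assembling the pieces gives $g(\chi,Z)=\frac{7}{16}\theta(Z)$ for all $Z$, hence $\chi=\frac{7}{16}\theta^{\sharp}$.

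The main obstacle will be verifying carefully that the $(\delta\Phi)_{48}$--summand truly contributes nothing. The naive $e_i\lrcorner e_i$ argument handles the identity part of $\pi_7$, but the $\pi_7$ projection also has the term $(e_i\lrcorner(\delta\Phi)_{48})\lrcorner\Phi$, and one must show $\sum_i\Phi$-contracted against $e_i\lrcorner(\delta\Phi)_{48}$ in the pattern $((e_i\lrcorner\gamma)\lrcorner\Phi)(e_i,Z)$ vanishes; this is where the defining property $\Phi\wedge(\delta\Phi)_{48}=0$ (equivalently, the vanishing of $\star\gamma_8$ for $\gamma=(\delta\Phi)_{48}$ in \eqref{eq:A5}) must be invoked, perhaps after rewriting $\gamma\lrcorner\Phi$ in terms of $\star(\Phi\wedge\gamma)$. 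An alternative, possibly shorter route is to instead use the first expression $\xi_X=-\frac{1}{2}(X\lrcorner\delta\Phi)_7+\frac{7}{4}(X^{\flat}\wedge\theta)_7$ and exploit $\chi=\sum_i\frac{1}{2}(e_i\lrcorner T^c)_7\,e_i$ from \eqref{eq:inttorsioncharacteristictorsion}: since $T^c$ is a $3$--form, $\sum_i g((e_i\lrcorner T^c)_7\,e_i,Z)$ splits via $\pi_7=\frac{1}{4}(\mathrm{id}+\lrcorner\Phi)$ into $\frac{1}{4}\sum_i(e_i\lrcorner T^c)(e_i,Z)$, which is $\frac{1}{4}$ times a trace that vanishes by antisymmetry of the $3$--form $T^c$, plus $\frac{1}{4}\sum_i\Phi$ contracted with $e_i\lrcorner T^c$; writing $T^c=\frac{1}{6}(\delta\Phi)_8-(\delta\Phi)_{48}$ and using Lemma \ref{lem:Tdecmposition} together with \eqref{eq:A5} reduces everything to the Lee form. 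Whichever route is taken, the computation is short once the $\Lambda^3_{48}$--contraction identities are in hand, and the output is the clean relation $\chi=\frac{7}{16}\theta^{\sharp}$.
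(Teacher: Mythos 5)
Your proposal is correct and follows essentially the same route as the paper: a direct computation of the trace $\sum_i\xi_{e_i}e_i$ from Proposition \ref{prop:intrinsictorsion}, expanding $\pi_7=\frac14(\mathrm{id}+\lrcorner\Phi)$ and using the contraction identity \eqref{eq:A1} together with antisymmetry. The only (harmless) difference is that the paper uses the first expression, so the $\delta\Phi$--term contributes $-21\theta$ against $\tfrac{49}{2}\theta$ from the Lee--form term, whereas your use of the second expression makes the $(\delta\Phi)_{48}$--contribution vanish outright via $\Phi\wedge(\delta\Phi)_{48}=0$ — exactly the point you flagged, and your reduction $\sum_i e_i\lrcorner\star(\Phi\wedge(e_i\lrcorner\gamma))=3\star(\Phi\wedge\gamma)=0$ closes it.
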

\begin{proof}
We identify $1$--forms with vector fields. Using \eqref{eq:A1} we get
\begin{align*}
8\chi &=\sum_i (-e_i\lrcorner e_i\lrcorner\delta\Phi-e_i\lrcorner\star((e_i\lrcorner\delta\Phi)\wedge\Phi)+\frac{7}{2}e_i\lrcorner(e^i\wedge\theta)+\frac{7}{2}e_i\lrcorner\star(e^i\wedge\theta\wedge\Phi))\\
&=\sum_i(-\star(e^i\wedge(e_i\lrcorner\delta\Phi)\wedge\Phi)+\frac{7}{2}(\theta-\theta(e_i)e_i))\\
&=-3\star(\delta\Phi\wedge\Phi)+\frac{49}{2}\theta\\
&=\frac{7}{2}\theta.
\end{align*}
\end{proof}

\begin{lem}\label{lem:symminusalt}
We have
\begin{equation*}
|\xi^{\rm sym}|^2-|\xi^{\rm alt}|^2=-\frac{35}{256}|\theta|^2+\frac{1}{16}|(\delta\Phi)_{48}|^2.
\end{equation*}
\end{lem}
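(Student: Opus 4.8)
The plan is to compute the quantity $|\xi^{\rm sym}|^2-|\xi^{\rm alt}|^2=\sum_{i,j}g(\xi_{e_i}e_j,\xi_{e_j}e_i)$ directly from the explicit formula for the intrinsic torsion in Proposition~\ref{prop:intrinsictorsion}. I would use the second expression there, $\xi_X=-\tfrac12(X\lrcorner(\delta\Phi)_{48})_7+\tfrac14(X^\flat\wedge\theta)_7$, so that the two building blocks are a ``$\mathcal{W}_{48}$ part'' $\alpha_X:=-\tfrac12(X\lrcorner(\delta\Phi)_{48})_7$ and a ``$\mathcal{W}_8$ part'' $\beta_X:=\tfrac14(X^\flat\wedge\theta)_7$. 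Then $\sum_{i,j}g(\xi_{e_i}e_j,\xi_{e_j}e_i)$ expands into three pieces: the pure $\alpha\alpha$ term, the pure $\beta\beta$ term, and the cross term $2\sum_{i,j}g(\alpha_{e_i}e_j,\beta_{e_j}e_i)$.

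First I would dispose of the $\beta\beta$ term. Since $\xi_X=\beta_X$ is exactly the $\mathcal{W}_8$ intrinsic torsion, the tensor $r$ for a pure $\mathcal{W}_8$ structure was already computed in the previous section to be $r(X,Y)=-\tfrac1{64}(|\theta|^2 g(X,Y)-\theta(X)\theta(Y))$, and the same Corollary~6.22-type identity from \cite{SW} that was used there gives $\sum_{i,j}g(\beta_{e_i}e_j,\beta_{e_j}e_i)$ as an explicit multiple of $|\theta|^2$; tracing through the coefficients (the $\mathcal W_8$ torsion is $\tfrac1{16}(g(X,Y)\theta-\theta(Y)X+\Phi(X,\theta,Y))$, whose ``$e_i,e_j$ vs $e_j,e_i$'' contraction is elementary) yields a clean $c_1|\theta|^2$.

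Next, for the $\alpha\alpha$ term I would write $\alpha_X$ as a genuine $2$-form and use the projection formula $\omega_7=\tfrac14(\omega+\omega\lrcorner\Phi)$ recorded just before Section~2.2. The key algebraic inputs are: $(X\lrcorner(\delta\Phi)_{48})$ contracted against $\Phi$, for which the identities \eqref{eq:A2}--\eqref{eq:A5} (in particular $\Phi\wedge(\gamma\lrcorner\Phi)=7\star\gamma_8$, which for $\gamma=(\delta\Phi)_{48}$ gives zero) and the ``norm'' relations coming from \eqref{eq:A3} are exactly what control the $\mathcal{W}_{48}$ piece; these should collapse $\sum_{i,j}g(\alpha_{e_i}e_j,\alpha_{e_j}e_i)$ to a multiple of $|(\delta\Phi)_{48}|^2$ plus possibly a term $\langle(\delta\Phi)_{48},\text{something built from }\theta\rangle$, but the latter must vanish since $(\delta\Phi)_{48}\perp\Lambda^3_8$ and everything built from $\theta$ this way lies in $\Lambda^3_8$. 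Finally, the cross term $2\sum_{i,j}g(\alpha_{e_i}e_j,\beta_{e_j}e_i)$: here I would again use that $\alpha_{e_i}$ takes values in $\Lambda^2_7$ built from $(\delta\Phi)_{48}\in\Lambda^3_{48}$ while $\beta$ is built from $\theta$, and argue by ${\rm Spin}(7)$-equivariance (there is no equivariant bilinear pairing $\Lambda^3_{48}\times\Lambda^1\to\mathbb R$) that this cross term vanishes identically. Summing the three contributions gives $c_1|\theta|^2+c_2|(\delta\Phi)_{48}|^2$, and matching $c_1=-\tfrac{35}{256}$, $c_2=\tfrac1{16}$.

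\textbf{Main obstacle.} The delicate point is the $\alpha\alpha$ computation: one must carefully expand $\bigl(X\lrcorner(\delta\Phi)_{48}\bigr)_7$ using $\omega_7=\tfrac14(\omega+\omega\lrcorner\Phi)$, re-contract, and repeatedly invoke \eqref{eq:A2}--\eqref{eq:A5} together with the symmetries of $\Phi$ and the first Bianchi-type identity for $3$-forms $\gamma$ with $\Phi\wedge\gamma=0$; keeping track of the precise numerical factor $\tfrac1{16}$ is where errors are most likely. A useful cross-check is the pure $\mathcal{W}_{48}$ case, where $\theta=0$ forces $|\xi^{\rm sym}|^2-|\xi^{\rm alt}|^2=\tfrac1{16}|(\delta\Phi)_{48}|^2$, which one can independently verify against the $\mathcal{W}_{48}$ divergence identity, and the pure $\mathcal{W}_8$ case, where it must reduce to $-\tfrac{35}{256}|\theta|^2$ consistently with the already-computed $r$.
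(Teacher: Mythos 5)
Your plan is correct and at its core follows the same route as the paper: both start from the second formula in Proposition \ref{prop:intrinsictorsion} and evaluate $\sum_{i,j}g(\xi_{e_i}e_j,\xi_{e_j}e_i)$ directly. The difference is in how the expansion is organized, and your version is a genuine simplification of the hardest part. The paper expands $8\xi_{e_i}e_j$ into all its elementary pieces at once and disposes of the resulting sum --- including the mixed $\theta$--$(\delta\Phi)_{48}$ terms --- by unspecified ``long computations'' with \eqref{eq:A0}--\eqref{eq:A6}. You instead split $\xi=\alpha+\beta$ into its $\mathcal{W}_{48}$ and $\mathcal{W}_8$ parts and kill the cross term wholesale by Schur's lemma: pointwise it is a ${\rm Spin}(7)$--invariant element of $(\Lambda^3_{48})^{\ast}\otimes(\Lambda^1)^{\ast}$, and these irreducibles are non-isomorphic (they have different dimensions), so it vanishes; this replaces the most error-prone block of the paper's computation by a one-line argument. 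Your $\beta\beta$ term is indeed elementary and gives $-\tfrac{35}{256}|\theta|^2$: the only nontrivial contraction is the $\Phi$--$\Phi$ one, which Corollary 6.22 of \cite{SW} (or the identity $|\theta\lrcorner\Phi|^2=7|\theta|^2$) evaluates to $-42|\theta|^2$, and $8-1-1+1-42=-35$. For the $\alpha\alpha$ term, which you flag as the main obstacle, you could push your own idea one step further: since $\Lambda^3_{48}$ is irreducible, every invariant quadratic form on it is a multiple of the norm, so the coefficient $\tfrac{1}{16}$ can be pinned down by evaluating on a single pure $\mathcal{W}_{48}$ example (e.g.\ the $M(k)\times\mathbb{T}^5$ example later in the paper, where $|\xi^{\rm sym}|^2-|\xi^{\rm alt}|^2=\tfrac{k^2}{4}=\tfrac{1}{16}|\delta\Phi|^2$), avoiding the delicate expansion entirely. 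Both proofs leave some algebra schematic; yours localizes it to one term and supplies independent consistency checks, so the plan is sound.
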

\begin{proof}
Firstly, by Proposition \ref{prop:intrinsictorsion} we have
\begin{align*}
8\xi_{e_i}e_j &=-e_j\lrcorner e_i\lrcorner(\delta\Phi)_{48}-e_j\lrcorner\star(\Phi\wedge e_i\lrcorner(\delta\Phi)_{48}))\\
&+\frac{1}{2}\delta_{ij}\theta-\frac{1}{2}\theta(e_j)e_i+\frac{1}{2}e_j\lrcorner\star(\Phi\wedge e^i \wedge\theta).
\end{align*}
Thus
\begin{align*}
64(|\xi^{\rm sym}|^2-|\xi^{\rm alt}|^2) &=64\sum_{i,j}\skal{\xi_{e_i}e_j}{\xi_{e_j}e_i}\\
&=-6|(\delta\Phi)_{48}|^2+2\skal{e_j\lrcorner e_i\lrcorner(\delta\Phi)_{48}}{e_i\lrcorner\star(\Phi\wedge e_j\lrcorner(\delta\Phi)_{48}))}\\
&-\skal{e_j\lrcorner e_i\lrcorner(\delta\Phi)_{48}}{e_i\lrcorner\star(\Phi\wedge e^j \wedge\theta)}\\
&-\skal{e_j\lrcorner\star(\Phi\wedge e_i\lrcorner(\delta\Phi)_{48}))}{e_i\lrcorner\star(\Phi\wedge e_j\lrcorner(\delta\Phi)_{48}))}\\
&-e_i\lrcorner\star(\Phi\wedge e_i\lrcorner(\delta\Phi)_{48}))\theta\\
&-\skal{e_j\lrcorner\star(\Phi\wedge e_i\lrcorner(\delta\Phi)_{48}))}{e_i\lrcorner\star(\Phi\wedge e^j \wedge\theta)}\\
&+2|\theta|^2-\frac{1}{2}|\theta|^2+\frac{1}{4}|\theta|^2.
\end{align*}
The lemma follows by long computations using formula for $\Phi$, the fact that $(\delta\Phi)_{48}\wedge\Phi=0$ and properties \eqref{eq:A0}--\eqref{eq:A6}.
\end{proof}

Now we are ready to state and prove the main formulas, which are well known (see, for example, \cite{UF} and \\cite{SI})

\begin{thm}\label{thm:main}
On a ${\rm Spin}(7)$--structure $(M,g,\Phi)$ the following divergence formulas hold
\begin{align*}
7{\rm div}\theta^{\sharp} &=8(s^{\rm Spin(7)}-s)+\frac{21}{8}|\theta|^2-\frac{1}{2}|(\delta\Phi)_{48}|^2,\\
\frac{7}{2}{\rm div}\theta^{\sharp} &=-s+\frac{21}{8}|\theta|^2-\frac{1}{2}|(\delta\Phi)_{48}|^2.
\end{align*}
\end{thm}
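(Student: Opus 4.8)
The plan is to obtain both identities by a direct substitution into the two divergence formulas \eqref{eq:divformula1} and \eqref{eq:divformula2}, which are themselves the specializations to a ${\rm Spin}(7)$--structure of the general $G$--structure divergence identities of \cite{KN}. As recalled in the Remark preceding the statement, the bracket contribution $\sum_{i,j}g([\xi_{e_i},\xi_{e_j}]_{\alg{spin}(7)^{\bot}}e_j,e_i)$ that is present for a general $G$ drops out here because $[\alg{spin}(7)^{\bot},\alg{spin}(7)^{\bot}]\subset\alg{spin}(7)$, so I would take \eqref{eq:divformula1} and \eqref{eq:divformula2} as given and only feed in the three preparatory lemmas.

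First, Lemma \ref{lem:characteristicvectorfield} gives $\chi=\frac{7}{16}\theta^{\sharp}$, hence ${\rm div}\,\chi=\frac{7}{16}{\rm div}\,\theta^{\sharp}$ and $|\chi|^2=\frac{49}{256}|\theta|^2$. Together with Lemma \ref{lem:symminusalt}, which after a sign change reads $|\xi^{\rm alt}|^2-|\xi^{\rm sym}|^2=\frac{35}{256}|\theta|^2-\frac{1}{16}|(\delta\Phi)_{48}|^2$, the combination occurring on the right-hand side of both divergence formulas collapses to
\begin{equation*}
|\chi|^2+|\xi^{\rm alt}|^2-|\xi^{\rm sym}|^2=\frac{49+35}{256}|\theta|^2-\frac{1}{16}|(\delta\Phi)_{48}|^2=\frac{21}{64}|\theta|^2-\frac{1}{16}|(\delta\Phi)_{48}|^2.
\end{equation*}

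Substituting this into \eqref{eq:divformula1} yields $\frac{7}{8}{\rm div}\,\theta^{\sharp}=s^{\rm Spin(7)}-s+\frac{21}{64}|\theta|^2-\frac{1}{16}|(\delta\Phi)_{48}|^2$, and multiplying through by $8$ gives the first formula. For the second, I would in addition invoke Lemma \ref{lem:sspin7bot} to replace $s_{\alg{spin}(7)^{\bot}}$ by $\tfrac14 s$ in \eqref{eq:divformula2}, obtaining $\frac{7}{16}{\rm div}\,\theta^{\sharp}=-\frac{1}{8}s+\frac{21}{64}|\theta|^2-\frac{1}{16}|(\delta\Phi)_{48}|^2$, and again multiply through by $8$. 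This reproduces both stated identities. The theorem is therefore pure bookkeeping once the lemmas are in hand, and I do not expect any genuine obstacle at this level: the entire geometric content is concentrated in Lemmas \ref{lem:sspin7bot}--\ref{lem:symminusalt}, the only laborious one being Lemma \ref{lem:symminusalt} with its numerous contraction identities for $\Phi$. The single point that deserves a line of verification is that the normalization conventions of \cite{KN} for $s^{\rm Spin(7)}$ and $s_{\alg{spin}(7)^{\bot}}$, and the sign conventions for $\xi$ and $\delta\Phi$, agree with those adopted in the present article, since a change of convention would only shift signs of the divergence terms.
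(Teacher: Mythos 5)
Your proposal is correct and is exactly the paper's argument: the proof of Theorem \ref{thm:main} consists precisely of substituting Lemmas \ref{lem:sspin7bot}, \ref{lem:characteristicvectorfield} and \ref{lem:symminusalt} into the divergence formulas \eqref{eq:divformula1} and \eqref{eq:divformula2}, and your arithmetic (the combination $\frac{21}{64}|\theta|^2-\frac{1}{16}|(\delta\Phi)_{48}|^2$ and the final multiplication by $8$) checks out. Your closing remark about verifying the sign and normalization conventions of \cite{KN} is a sensible caution but not something the paper itself elaborates on.
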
 
\begin{proof}
It suffices to apply the formula for the characteristic vector field of Lemma \ref{lem:characteristicvectorfield} and Lemmas \ref{lem:sspin7bot}, \ref{lem:symminusalt}.
\end{proof}

\begin{cor}\label{cor:scalarcurvature}
The scalar curvature of a ${\rm Spin}(7)$--structure induced by the Lee form $\theta$ and $\Lambda^3_{48}$ component of $\delta\Phi$ equals
\begin{equation}\label{eq:maindivrewritten}
s=\frac{21}{8}|\theta|^2-\frac{1}{2}|(\delta\Phi)_{48}|^2+\frac{7}{2}{\rm div}\,\theta.
\end{equation}
\end{cor}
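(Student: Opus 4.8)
The plan is to obtain \eqref{eq:maindivrewritten} as an immediate algebraic consequence of the second divergence formula in Theorem \ref{thm:main}, so essentially no new work is required beyond a rescaling and a sign observation. First I would recall the second identity of Theorem \ref{thm:main}, namely
\[
\tfrac{7}{2}{\rm div}\,\theta^{\sharp}=-s+\tfrac{21}{8}|\theta|^2-\tfrac12|(\delta\Phi)_{48}|^2,
\]
and simply solve it for $s$. Since ${\rm div}\,\theta$ and ${\rm div}\,\theta^{\sharp}$ denote the same scalar function under the musical identification of $1$--forms and vector fields used throughout the paper, transposing the terms gives
\[
s=\tfrac{21}{8}|\theta|^2-\tfrac12|(\delta\Phi)_{48}|^2+\tfrac{7}{2}{\rm div}\,\theta,
\]
which is exactly \eqref{eq:maindivrewritten}.

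For completeness I would also indicate the provenance of that second divergence identity, in case the reader wants the self-contained chain: it comes from plugging Lemma \ref{lem:sspin7bot} ($s_{\alg{spin}(7)^{\bot}}=\tfrac14 s$), Lemma \ref{lem:characteristicvectorfield} ($\chi=\tfrac{7}{16}\theta^{\sharp}$, whence ${\rm div}\,\chi=\tfrac{7}{16}{\rm div}\,\theta^{\sharp}$ and $|\chi|^2=\tfrac{49}{256}|\theta|^2$), and Lemma \ref{lem:symminusalt} ($|\xi^{\rm sym}|^2-|\xi^{\rm alt}|^2=-\tfrac{35}{256}|\theta|^2+\tfrac{1}{16}|(\delta\Phi)_{48}|^2$) into the divergence formula \eqref{eq:divformula2},
\[
{\rm div}\,\chi=-\tfrac12 s_{\alg{spin}(7)^{\bot}}+|\chi|^2+|\xi^{\rm alt}|^2-|\xi^{\rm sym}|^2,
\]
and clearing the common denominator $256$; the coefficient bookkeeping $\tfrac{49}{256}-\tfrac18\cdot\tfrac14\cdot\tfrac{256}{256}$ etc.\ then produces the stated constants $\tfrac{21}{8}$, $-\tfrac12$, $\tfrac72$.

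The only genuine obstacle in this whole argument is none of the above bookkeeping — which is routine — but rather the verification of Lemma \ref{lem:symminusalt}, i.e.\ the "long computations using the formula for $\Phi$, the fact that $(\delta\Phi)_{48}\wedge\Phi=0$ and properties \eqref{eq:A0}--\eqref{eq:A6}" that pin down the precise rational coefficients $-\tfrac{35}{256}$ and $\tfrac{1}{16}$; once those are in hand, Corollary \ref{cor:scalarcurvature} is a one-line rearrangement. I would therefore present the proof of this corollary in a single sentence: it is the second formula of Theorem \ref{thm:main} rewritten with $s$ on the left and ${\rm div}\,\theta={\rm div}\,\theta^{\sharp}$ on the right.

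\begin{proof}
This is the second divergence formula of Theorem \ref{thm:main} solved for $s$, using that ${\rm div}\,\theta={\rm div}\,\theta^{\sharp}$ under the identification of $1$--forms with vector fields.
\end{proof}
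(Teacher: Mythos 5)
Your route is the same as the paper's: Corollary \ref{cor:scalarcurvature} carries no separate proof there and is meant as an immediate rearrangement of the second identity of Theorem \ref{thm:main}, and your tracing of that identity back through Lemmas \ref{lem:sspin7bot}, \ref{lem:characteristicvectorfield}, \ref{lem:symminusalt} and formula \eqref{eq:divformula2} is exactly the paper's derivation. You are also right that all the genuine content sits in Lemma \ref{lem:symminusalt}.

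However, the one algebraic step you actually perform does not go through as written. From
\begin{equation*}
\tfrac{7}{2}\,{\rm div}\,\theta^{\sharp}=-s+\tfrac{21}{8}|\theta|^2-\tfrac12|(\delta\Phi)_{48}|^2
\end{equation*}
solving for $s$ yields
\begin{equation*}
s=\tfrac{21}{8}|\theta|^2-\tfrac12|(\delta\Phi)_{48}|^2-\tfrac{7}{2}\,{\rm div}\,\theta^{\sharp},
\end{equation*}
with a \emph{minus} sign on the divergence term. Since you explicitly assert that ${\rm div}\,\theta$ and ${\rm div}\,\theta^{\sharp}$ are the same scalar (a convention the paper's second example confirms, computing ${\rm div}\,\theta=-\tfrac{2}{7}\sum_i g(\nabla_{e_i}e_5,e_i)$), ``transposing the terms'' cannot produce the $+\tfrac{7}{2}{\rm div}\,\theta$ of \eqref{eq:maindivrewritten}; your two displayed equations contradict each other. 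This sign tension is in fact already present in the source: Theorem \ref{thm:main} and Corollary \ref{cor:scalarcurvature} as printed are compatible only if ${\rm div}\,\theta=-{\rm div}\,\theta^{\sharp}$, and the discrepancy traces back to the sign of the characteristic vector field (the introduction states $\chi=-\tfrac{7}{16}\theta^{\sharp}$ while Lemma \ref{lem:characteristicvectorfield} states $\chi=+\tfrac{7}{16}\theta^{\sharp}$); the examples cannot detect it because ${\rm div}\,\theta$ vanishes in both. A correct proof of the corollary must therefore either carry the minus sign through, or fix the sign of $\chi$ (and hence of the left-hand side of Theorem \ref{thm:main}) and say so explicitly. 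As it stands, your one-line rearrangement contains a sign error rather than a proof.
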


We justify the divergence formula \eqref{eq:maindivrewritten} on certain examples. These examples has been already considered (see, for example, \cite{FMC}) but from other perspective.

\begin{exa}
Let $M(k)$ be a manifold defined as follows \cite{FMC}: choose a non--zero number $k\in\mathbb{R}$ and let $G(k)$ be the Lie group consisting of matrices of the form
\begin{equation*}
\left(\begin{array}{cccc}
e^{kz} & 0 & 0 & x \\
0 & e^{-kz} & 0 & y \\
0 & 0 & 1 & z \\
0 & 0 & 0 & 1
\end{array}\right),\quad x,y,z\in\mathbb{R}.
\end{equation*}
In other words, $G(k)$ is a group of affine transformations $\varphi:\mathbb{R}^3\to\mathbb{R}^3$ of the form $\varphi(x_0,y_0,z_0)=(e^{kz}x_0+x,e^{-kz}y_0+y,z_0+z)$. We may treat $(x,y,z)$ as a coordinate system on $G(k)$. Moreover, system $\{\eta_1=dx-kxdz, \eta_2=dy+kydy, \eta_3=dz\}$ forms a basis of right invariant one--forms. There is a discrete subgroup $\Gamma(k)$ such that the quotient manifold $M(k)=G(k)/\Gamma(k)$ is compact. Additionally, the considered system of one--forms on $G(k)$ descends to $M(k)$. Consider now a product manifold $M=G(k)\times\mathbb{T}^5$, where $\mathbb{T}^5$ is a $5$--dimensional torus. Then $M$ is a $8$-dimensional manifold. Let $\alpha_1,\ldots,\alpha_5$ be a basis of closed one--forms on $\mathbb{T}^5$. Put
\begin{equation*}
e^0=\alpha_1,\, e^1=\alpha_1,\, e^2=\alpha_3,\, e^3=\alpha_4,\, e^4=\alpha_5,\, e^5=\eta_1,\, e^6=\eta_2,\, e^7=\eta_3.
\end{equation*}
We define a metric tensor $g$ on $M$ such that frame of one--forms $e_0,e_1,\ldots,e_7$ is orthonormal. We have
\begin{equation*}
de^5=-ke^{57},\quad de^6=ke^{67},\quad \textrm{$de^i=0$ for remaining indices $i$}.
\end{equation*}
Define the $4$--form $\Phi$ by \eqref{eq:Phi} and consider a ${\rm Spin}(7)$--structure induced by $\Phi$. We have
\begin{equation*}
d\Phi=-ke^{01457}+ke^{02467}-ke^{23457}-ke^{13467}.
\end{equation*}
Thus,
\begin{equation*}
\delta\Phi=ke^{236}-ke^{135}+ke^{016}-ke^{025}.
\end{equation*}
This implies $\delta\Phi\wedge\Phi=0$. Since $d\Phi\neq 0$, it follows that defined ${\rm Spin}(7)$--structure is of type $\mathcal{W}_{48}$. We have
\begin{equation*}
|\delta\Phi|^2 = 4k^2.
\end{equation*}
Moreover, the intrinsic torsion equals
\begin{align*}
\xi_{e_0} &=\frac{k}{4}(e^{16}-e^{07}+e^{34}-e^{25}),\\
\xi_{e_1} &=\frac{k}{4}(e^{35}-e^{06}+e^{24}-e^{17}),\\
\xi_{e_2} &=\frac{k}{4}(e^{36}+e^{05}-e^{27}-e^{14}),\\
\xi_{e_6} &=\frac{k}{4}(e^{23}+e^{01}-e^{67}-e^{45})
\end{align*}
with the remaining components vanishing. Thus,
\begin{equation*}
\chi=\sum_i \xi_{e_i}e_i=0,\quad |\xi^{\rm sym}|^2-|\xi^{\rm alt}|^2=\frac{k^2}{4}=\frac{1}{16}|\delta\Phi|^2.
\end{equation*}
Let us compute the scalar curvature of $(M,g)$. Since $de^i(e_j,e_k)=-e_i([e_j,e_k])$, it follows that the nonzero Lie brackets $[e_j,e_k]$ are
\begin{equation*}
[e_5,e_7]=ke_5,\quad [e_7,e_5]=-ke_5,\quad [e_6,e_7]=-ke_6\,\quad [e_7,e_6]=ke_6.
\end{equation*}
Thus, using Koszul formula for the Levi-Civita connection, we get the following nonzero expressions
\begin{equation*}
\nabla_{e_5}e_7=ke_5,\quad \nabla_{e_6}e_7=-ke_6,\quad\nabla_{e_5}e_5=-ke_7,\quad \nabla_{e_6}e_6=ke_7.
\end{equation*}
This implies that the only nonzero components $R_{ijji}$ of the Riemann curvature tensor are
\begin{equation*}
R_{5775}=R_{7557}=R_{6776}=R_{7667}=-k^2,\quad R_{5665}=R_{6556}=k^2.
\end{equation*} 
Thus, the scalar curvature is $s=-2k^2$. Finally, the left-hand side of the divergence formula \eqref{eq:maindivrewritten} is $s=-2k^2$, which indeed coincides with $-\frac{1}{2}|\delta\Phi|^2$ as seen above. 
\end{exa}

\begin{exa}
Let $M=M(k)\times H/\Gamma\times\mathbb{T}^2$, where $M(k)$ and $\mathbb{T}$ are as before and $H/\Gamma$ is a quotient of a Heisenberg group $H$ by a certain discrete subgroup $\Gamma$, which are defined as follows: $H$ consists of matrices of the form
\begin{equation*}
\left(\begin{array}{ccc}
1 & x & y \\ 0 & 1 & z \\ 0 & 0 & 1
\end{array}\right),\quad x,y,z\in\mathbb{R},
\end{equation*}
whereas $\Gamma$ is a subgroup with $x,y,z\in\mathbb{Z}$. The $1$--orms $\mu_1=dx,\mu_2=dy,\mu_3=dz-xdy$ define a basis of left--invariant forms, which descent to $H/\Gamma$. Then $d\mu_3=-\mu_1\wedge\mu_2$. Denote by $\eta_1,\eta_2,\eta_3$ and by $\alpha_1, \alpha_2$ one--forms on $M(k)$ and $\mathbb{T}^2$ considered before. Put
\begin{equation*}
e^0=\alpha_1,\, e^1=\alpha_2,\, e^2=\mu_1,\, e^3=\mu_2,\, e^4=\mu_3,\, e^5=\eta_1,\, e^6=\eta_2,\, e^7=\eta_3.
\end{equation*}
Then
\begin{equation*}
de^4=-e^{23},\quad de^5=-ke^{57},\quad de^6=ke^{67}
\end{equation*}
with the remaining differentials being zero. Consider the Riemannian metric on $M$ such that $(e_i)$ forms an orthonormal basis.

Define $\Phi$ by \eqref{eq:Phi}. Then
\begin{equation*}
d\Phi=e^{01235}-e^{23567}-ke^{01457}+ke^{02467}-ke^{23457}-ke^{13467}
\end{equation*}
and
\begin{equation*}
\delta\Phi=-e^{467}+e^{014}+ke^{236}-ke^{135}+ke^{016}+ke^{025}.
\end{equation*}
This implies $7\theta=\star(\delta\Phi\wedge\Phi)=-2e_5$ and $d\Phi\neq\theta\wedge\Phi$. Hence, given ${\rm Spin}(7)$--structure is of type $\mathcal{W}_8\oplus\mathcal{W}_{48}$ and not of pure type. Moreover,
\begin{equation*}
|\delta\Phi|^2=4k^2+2\quad\textrm{and}\quad |\theta|^2=\frac{4}{49},\quad |(\delta\Phi)_{48}|^2=\frac{10}{7}+4k^2.
\end{equation*}
The intrinsic torsion equals
\begin{align*}
\xi_{e_2} &=-\frac{1}{8}(e^{25}+e^{07}-e^{34}-e^{16}),\\
\xi_{e_3} &=-\frac{1}{8}(e^{35}-e^{06}+e^{24}-e^{17}),\\
\xi_{e_4} &=-\frac{1}{8}(-e^{67}+e^{01}-e^{45}+e^{23}),\\
\xi_{e_5} &=-\frac{k}{4}(-e^{13}+e^{02}+e^{57}-e^{46},\\
\xi_{e_6} &=-\frac{k}{4}(e^{23}+e^{01}-e^{67}-e^{45})
\end{align*}
with the remaining elements vanishing. Thus
\begin{equation*}
\xi=\sum_i \xi_{e_i}e_i=-\frac{1}{8}e^5=\frac{7}{16}\theta
\end{equation*}
and
\begin{equation*}
|\xi^{\rm sym}|^2-|\xi^{\rm alt}|^2=\frac{5}{64}+\frac{k}{4}=-\frac{35}{256}|\theta|^2+\frac{1}{16}|(\delta\Phi)_8)|^2.
\end{equation*}
Let us derive the formula for the Levi-Civita connection. As before, the nonzero Lie brackets are
\begin{equation*}
[e_2,e_3]=-[e_3,e_2]=e_4,\quad [e_5,e_7]=-[e_7,e_5]=ke_5,\quad [e_6,e_7]=-[e_7,e_6]=-ke_7.
\end{equation*}
Therefore, nonzero components of the Levi-Civita connection, by the Koszul formula, are
\begin{align*}
&\nabla_{e_2}e_3=-\frac{1}{2}e_4,\quad \nabla_{e_2}e_4=\nabla_{e_4}e_2=-\frac{1}{2}e_3,\quad \nabla_{e_3}e_4=\nabla_{e_4}e_3=\frac{1}{2}e_2,\\ 
&\nabla_{e_5}e_7=ke_5,\quad \nabla_{e_6}e_7=-ke_6,\quad \nabla_{e_5}e_5=-ke_7,\quad \nabla_{e_6}e_6=ke_7.
\end{align*}
In particular,
\begin{equation*}
{\rm div}\theta=-\frac{2}{7}\sum_i g(\nabla_{e_i}e_5,e_i)=0.
\end{equation*}
Nonzero elements $R_{ijji}$ of the Riemann curvature tensor are
\begin{align*}
& R_{2332}=R_{3223}=-\frac{3}{4},\quad R_{2442}=R_{4224}=R_{3443}=R_{4334}=\frac{1}{4},\\
& R_{5775}=R_{7557}=R_{6776}=R_{7667}=-k^2,\quad R_{5665}=R_{6556}=k^2.
\end{align*}
This implies
\begin{equation*}
s=-\frac{1}{2}-2k^2.
\end{equation*}
Hence the left hand side of \eqref{eq:maindivrewritten} is $-\frac{1}{2}-2k^2$, which is indeed equal to the right hand side
\begin{equation*}
\frac{21}{8}\cdot \frac{4}{49}-\frac{1}{2}(\frac{10}{7}+4k^2)=-2k^2-\frac{1}{2}.
\end{equation*}
\end{exa}

\end{document}